 \numberwithin {equation}{section}
\newtheorem{theo}{Theorem}[section]
\newtheorem{corol}[theo]{Corollary}
\newtheorem{prop}[theo]{Proposition}
\theoremstyle{definition}
\newtheorem{remark}[theo]{Remark}
\newtheorem{remarks}[theo]{Remarks}
\newtheorem{defi}[theo]{Definition}
\newcommand{\bequ}{\begin{equation}} %
\newcommand{\eequ}{\end{equation}}
\newcommand{\bequs}{\begin{equation*}}
\newcommand{\eequs}{\end{equation*}}
\newcommand{\beqs}{\begin{equation*}}
\newcommand{\eeqs}{\end{equation*}}
\newcommand{\Real}{\mathbb R}
\newcommand{\Nat}{\mathbb N}
\newcommand{\vphi}{\varphi}
\newcommand{\epsic}{\varepsilon}
\newcommand{\lbda}{\lambda}
\newcommand{\Lra}{\ \Longrightarrow\ }
\newcommand{\rar}{\rightarrow}
\newcommand{\lrar}{\leftrightarrow}
\newcommand{\Ra}{\;\Rightarrow\;}
\newcommand{\sms}{\smallsetminus}
\newcommand{\sse}{\subseteq}
\newcommand{\mbx}{\mbox}
\newcommand{\ov}{\overline}
\newcommand{\un}{\underline}
\newcommand{\ety}{\emptyset}
\newcommand{\dom}{\operatorname{dom}}
\newcommand{\ben}{\begin{enumerate}}
\newcommand{\een}{\end{enumerate}}
\newcommand{\eite}{\end{itemize}}
\newcommand{\bite}{\begin{itemize}}
\newcommand{\q}{\quad}
\newcommand{\rond}{\mathcal}
\newcommand{\rd}{\mathcal}
\begin{document}
\title{Ekeland, Takahashi and Caristi principles in  quasi-pseudometric spaces}
\author{S. Cobza\c{s} }
\address{\it Babe\c s-Bolyai University, Faculty of Mathematics
and Computer Science, 400 084 Cluj-Napoca, Romania}\;\;
\email{scobzas@math.ubbcluj.ro}
\begin{abstract}  We prove  versions  of Ekeland, Takahashi and Caristi principles in sequentially right $K$-complete quasi-pseudometric spaces (meaning asymmetric pseudometric spaces),  the equivalence  between these principles, as well as their equivalence to  the completeness of the underlying quasi-pseudometric space.

The key tools    are Picard sequences for  some special set-valued mappings corresponding to a function $\vphi$ on a quasi-pseudometric space,    allowing  a unitary treatment of  all these principles.\medskip

\textbf{Classification MSC 2010:} 58E30 47H10  54E25 54E35 54E50 \medskip

\textbf{Key words:} quasi-metric space;  completeness in   quasi-metric spaces;  variational principles;  Ekeland variational principle; Takahashi minimization principle; fixed point; Caristi fixed point theorem.

\end{abstract}

\date{\today}
\maketitle
\section{Introduction}

Ivar Ekeland announced  in 1972, \cite{ekl72}  (the proof appeared  in 1974, \cite{ekl74}) a theorem asserting the existence of the minimum of a small perturbation of a lower semicontinuous (lsc) function defined on a complete metric space. This result, known as Ekeland Variational Principle (EkVP), proved to be a very versatile tool in various areas of mathematics and applications  - optimization theory, geometry of Banach spaces, optimal control theory, economics, social sciences, and others. Some of these applications are presented by Ekeland himself in \cite{ekl79}.

At the same time, it turned out that this principle is equivalent to a lot  of results in fixed point theory (Caristi fixed point theorem), geometry of Banach spaces  (drop property), and  others (see  \cite{penot86}, for instance).
Takahashi \cite{taka91} (see also \cite{Taka00}) found a sufficient condition for the existence of the minimum of a lsc function on a complete metric space, known as Takahashi minimization principle, which also turned  to be equivalent to  EkVP (see \cite{taka91} and \cite{hamel94}).

For convenience, we mention these three principles.

\begin{theo}[Ekeland, Takahashi and Caristi principles]\label{t1.Ek-Taka-Car} Let $(X,d)$ be a  complete  metric space and $\vphi:X\to\Real\cup\{\infty\}$ a proper bounded below  lsc function.
Then the following hold.
\ben
\item[\rm (wEk)] There exists $z\in X$ such that $\vphi(z)<\vphi(x)+d(x,z)$ for all $x\in X\sms \{z\}.$
\item[\rm (Tak)] If for every $x\in X$  with $\vphi(x)>\inf\vphi(X)$   there exists  an element $y\in X\sms\{x\}$ such that $\vphi(y)+d(x,y)\le\vphi(x), $ then $\vphi$ attains its minimum on $X$, i.e., there  exists
$z\in X$ such that $\vphi(z)=\inf\vphi(X).$
\item[\rm (Car)] If the mapping  $T:X\to X$ satisfies $d(Tx,x)+\vphi(Tx)\le\vphi(x) $ for all $x\in X,$ then $T$ has a fixed point in $X$, i.e.,   there  exists
$z\in X$ such that $Tz=z.$
\een\end{theo}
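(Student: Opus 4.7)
The plan is to establish (wEk) by a Picard-type construction on the set-valued mapping
\[
S(x)=\{y\in X : \vphi(y)+d(x,y)\le\vphi(x)\},
\]
and then derive (Tak) and (Car) as short corollaries. Two structural features of $S$ are used repeatedly: $x\in S(x)$ whenever $\vphi(x)<\infty$, and $S(y)\sse S(x)$ for every $y\in S(x)$ (a direct consequence of the triangle inequality). Starting from any $x_0\in\dom\vphi$, I would inductively pick $x_{n+1}\in S(x_n)$ with
\[
\vphi(x_{n+1})\le \inf\vphi(S(x_n))+2^{-n}.
\]

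The next step is to show that $(x_n)$ converges and that its limit is the desired Ekeland point. Summing $d(x_n,x_{n+1})\le \vphi(x_n)-\vphi(x_{n+1})$ telescopes to $\sum_n d(x_n,x_{n+1})\le \vphi(x_0)-\inf\vphi(X)<\infty$, so $(x_n)$ is Cauchy and, by completeness, converges to some $z\in X$; meanwhile $\vphi(x_n)$ is monotonically decreasing, hence converges to some $L\ge\vphi(z)$ by lower semicontinuity. Passing to the $\liminf$ in $\vphi(x_n)+d(x_m,x_n)\le\vphi(x_m)$ (valid for $n\ge m$) yields $z\in S(x_m)$ for every $m$. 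Now for any $y\in S(z)$, transitivity gives $y\in S(x_m)$, hence $\vphi(y)\ge\inf\vphi(S(x_m))\ge\vphi(x_{m+1})-2^{-m}\to L\ge\vphi(z)$; combined with $\vphi(y)+d(z,y)\le\vphi(z)$, this forces $d(z,y)\le \vphi(z)-\vphi(y)\le 0$, so $y=z$. Thus $S(z)=\{z\}$, which unwraps to the conclusion $\vphi(z)<\vphi(x)+d(x,z)$ for every $x\in X\sms\{z\}$.

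Once (wEk) is in hand, both remaining principles follow in one line. For (Tak), if the Ekeland point $z$ satisfied $\vphi(z)>\inf\vphi(X)$, the Takahashi hypothesis would supply some $y\in X\sms\{z\}$ with $\vphi(y)+d(z,y)\le\vphi(z)$, contradicting the strict inequality in (wEk); hence $\vphi(z)=\inf\vphi(X)$. For (Car), applying (wEk) to the same $\vphi$ and taking $x:=Tz$, the Caristi inequality $\vphi(Tz)+d(Tz,z)\le\vphi(z)$ contradicts (wEk) unless $Tz=z$.

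The main technical point is the middle step: converting the $2^{-n}$-slack in the infimum choice into the sharp identity $S(z)=\{z\}$, which is where lower semicontinuity, the monotonicity of $\vphi(x_n)$, and the transitivity $S(y)\sse S(x)$ for $y\in S(x)$ all interact simultaneously. Everything else (the Cauchy estimate, the limit passage giving $z\in S(x_m)$, and the two short deductions of (Tak) and (Car)) is routine once the variational bound $\sum d(x_n,x_{n+1})\le\vphi(x_0)-\inf\vphi$ has been written down.
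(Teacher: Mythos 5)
Your proof is correct and follows essentially the same route as the paper: a Picard sequence $x_{n+1}\in S(x_n)$ chosen by an almost-infimum rule, the telescoping Cauchy estimate, passage to the limit to obtain $z\in S(x_m)$ for all $m$ and hence $S(z)=\{z\}$, and then (Tak) and (Car) as immediate consequences of (wEk). The only cosmetic difference is your $2^{-n}$-slack selection in place of the paper's halfway-to-the-infimum rule $\vphi(x_{n+1})<(\vphi(x_n)+J(x_n))/2$, which changes nothing essential.
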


The statement (wEk) is called the weak form of the Ekeland variational principle.  Later, various versions and extensions of these principles appeared, a good record (up to 2009) being given in the book \cite{Meghea}.

Some versions of EkVP and Takahashi minimum principles in $T_1$ quasi-metric spaces were proved in \cite{cobz11} and \cite{kassay19}, respectively. In \cite{cobz11}  the equivalence of  the weak Ekeland variational principle to Caristi fixed point theorem was proved  and the implications of the validity of  Caristi fixed point theorem principle on the completeness of the underlying quasi-pseudometric space were studied as well.  In \cite{kassay19}  the same is done for the weak form of Ekeland variational principle and   Takahashi minimization principle.
The extension of wEk to arbitrary quasi-metric spaces was given in \cite{karap-romag15}.

Other extensions,  with applications to various areas of social sciences and psychology, were
given in \cite{bcs18}--\cite{bao-mord-soub15c}, \cite{soub16},  \cite{soub-qiu18}.

  All these extensions
are obtained by  relaxing the conditions, else on the minimized function or   on the underlying metric space, or both. In this paper we  consider both  approaches -- we study the validity of EkVP in quasi-pseudometric spaces for functions satisfying a weaker semicontinuity  condition, called near lower semicontinuity  (see Subsection \ref{Ss.lsc}). Roughly speaking, a quasi-pseudometric   is a function $d$ on $X\times X$, where $X$ is a set, satisfying all the axioms of a pseudometric but symmetry, that is, the possibility that $d(x,y)\ne d(y,x)$ for some $x,y\in X$ is not excluded (see Section \ref{S.qm}). We also prove (Theorem \ref{t.Ek2-qm}) a full version of EkVP in quasi-pseudometric spaces for lsc functions.

The nearly lsc  functions were introduced in \cite{karap-romag15}. We  show that a nearly lsc function is  lsc if and only if it is monotone   with respect to the specialization order (see Subsection \ref{Ss.top-ord} and Proposition \ref{p1.lsc-n}).

The main results of the paper are contained in Section \ref{S.T0-qm}, where one proves quasi-pseudometric versions of Ekeland, Takahashi and Caristi principles  and
their equivalence. One proves also that the validity of wEkVP implies the sequential right-$K$-completeness of the underlying quasi-pseudometric space. We conclude this section by showing that the $T_1$ versions of these principles   are particular cases of those   proved for general quasi-pseudometric spaces.

The key tools used in the  proofs of these results are Picard sequences for  some special set-valued mappings corresponding to a function $\vphi$ on a quasi-pseudometric space (see Subsection \ref{Ss.Pic-seq}), which allow  a unitary treatment of  all these principles. The idea to use Picard sequences appeared in \cite{dhm83} and was subsequently exploited in \cite{bt15} and \cite{bcs18}.

\section{Quasi-metric spaces}\label{S.qm}
\subsection{Topological properties}

 A {\it quasi-pseudometric} on an arbitrary set $X$ is a mapping $d: X\times X\to
[0,\infty)$ satisfying the following conditions:
\begin{align*}%
\mbox{(QM1)}&\qquad d(x,y)\geq 0, \quad and  \quad d(x,x)=0;\\
\mbox{(QM2)}&\qquad d(x,z)\leq d(x,y)+d(y,z),  %
\end{align*}%
for all $x,y,z\in X.$ If further
$$%
\mbox{(QM3)}\qquad d(x,y)=d(y,x)=0\Rightarrow x=y,
$$%
for all $x,y\in X,$ then $d$ is called a {\it quasi-metric}. The pair $(X,d)$ is called a {\it
quasi-pseudometric space}, respectively  a {\it quasi-metric space}\footnote{In \cite{Cobzas} the term ``quasi-semimetric" is used instead of ``quasi-pseudometric"}. The conjugate of the quasi-pseudometric
$d$ is the quasi-pseudometric $\bar d(x,y)=d(y,x),\, x,y\in X.$ The mapping $
d^s(x,y)=\max\{d(x,y),\bar d(x,y)\},\,$ $ x,y\in X,$ is a pseudometric on $X$ which is a metric if and
only if $d$ is a quasi-metric.

If $(X,d)$ is a quasi-pseudometric space, then for $x\in X$ and $r>0$ we define the balls in $X$ by the formulae %
\begin{align*}%
B_d(x,r)=&\{y\in X : d(x,y)<r\} \; \mbox{-\; the open ball, and }\\ %
B_d[x,r]=&\{y\in X : d(x,y)\leq r\} \; \mbox{-\; the closed ball. } %
\end{align*} %

 The topology $\tau_d$ (or $\tau(d)$) of a quasi-pseudometric space $(X,d)$ can be defined starting from the family
$\rond{V}_d(x)$ of neighborhoods  of an arbitrary  point $x\in X$:%
\bequs
\begin{aligned}
V\in \rond{V}_d(x)\;&\iff \; \exists r>0\;\mbox{such that}\; B_d(x,r)\subseteq V\\
                             &\iff \; \exists r'>0\;\mbox{such that}\; B_d[x,r']\subseteq V. %
\end{aligned} %
\eequs

The convergence of a sequence $(x_n)$ to $x$ with respect to $\tau_d,$ called $d$-convergence and
denoted by
$x_n\xrightarrow{d}x,$ can be characterized in the following way %
\bequ\label{char-rho-conv1} %
         x_n\xrightarrow{d}x\;\iff\; d(x,x_n)\to 0. %
\eequ %

Also
\bequ\label{char-rho-conv2} %
         x_n\xrightarrow{\bar d}x\;\iff\;\bar d(x,x_n)\to 0\; \iff\; d(x_n,x)\to 0. %
\eequ %

As a space equipped with two topologies, $\tau_d$ and   $\tau_{\bar d}\,$, a quasi-pseudometric space can be viewed as a bitopological space in the sense of Kelly \cite{kelly63}.

The following   topological properties are true for  quasi-pseudometric spaces.
    \begin{prop}[see \cite{Cobzas}]\label{p.top-qsm1}
   If $(X,d)$ is a quasi-pseudometric space, then the following hold.
   \begin{enumerate}
   \item[\rm 1.] The ball $B_d(x,r)$ is $\tau_d$-open and  the ball $B_d[x,r]$ is
       $\tau_{\bar{d}}$-closed. The ball    $B_d[x,r]$ need not be $\tau_d$-closed.
     \item[\rm 2.]
   The topology $\tau_d$ is $T_0$ if and only if  $d $ is a quasi-metric.   \\ The topology $\tau_d$ is $T_1$ if and only if
   $d(x,y)>0$ for all  $x\neq y$  in $X$.
      \item [\rm 3.]  For every fixed $x\in X,$ the mapping $d(x,\cdot):X\to (\Real,|\cdot|)$ is
   $\tau_d$-usc and $\tau_{\bar d}$-lsc. \\
   For every fixed $y\in X,$ the mapping $d(\cdot,y):X\to (\Real,|\cdot|)$ is $\tau_d$-lsc and
   $\tau_{\bar d}$-usc.

  \end{enumerate}%
     \end{prop}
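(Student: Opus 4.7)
My plan is to verify each of the three items by a direct triangle-inequality calculation, paying careful attention to which of the two topologies $\tau_d$ and $\tau_{\bar d}$ is operative at each step.

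For part 1, to see that $B_d(x,r)$ is $\tau_d$-open I would take $y\in B_d(x,r)$, set $s=r-d(x,y)>0$, and read off $B_d(y,s)\subseteq B_d(x,r)$ from (QM2). For the $\tau_{\bar d}$-closedness of $B_d[x,r]$ I would pass to the complement: if $d(x,y)>r$ and $s=d(x,y)-r$, then $z\in B_{\bar d}(y,s)$ means $d(z,y)<s$, and (QM2) yields $d(x,z)\ge d(x,y)-d(z,y)>r$, so $z$ lies outside the closed ball. To show that $B_d[x,r]$ need not be $\tau_d$-closed I would exhibit the asymmetric example $d(x,y)=\max\{y-x,0\}$ on $\Real$: the $\tau_d$-open sets reduce to the downward rays $(-\infty,a)$, so $B_d[0,1]=(-\infty,1]$ has $\tau_d$-closure equal to all of $\Real$.

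For part 2, the axiom (QM3) is equivalent to the statement that for every pair $x\ne y$ at least one of $d(x,y),d(y,x)$ is strictly positive. Assuming $d(x,y)>0$, the $\tau_d$-open ball $B_d(x,d(x,y))$ contains $x$ but not $y$, which provides the required $T_0$ separating neighborhood; conversely any $T_0$ separating open set contains a basic $d$-ball and thereby forces the corresponding distance to be positive, which recovers (QM3). For the $T_1$ characterization I would iterate this argument: strict positivity of \emph{both} $d(x,y)$ and $d(y,x)$ is exactly what is needed to separate $x$ from $y$ using $B_d(x,d(x,y))$ and, in the opposite direction, $y$ from $x$ using $B_d(y,d(y,x))$; conversely $T_1$ applied to the pair $(x,y)$ extracts basic balls that make both distances strictly positive.

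For part 3, the sublevel set $\{z:d(x,z)<\alpha\}$ is literally $B_d(x,\alpha)$, hence $\tau_d$-open, proving the $\tau_d$-upper semicontinuity of $d(x,\cdot)$; the $\tau_{\bar d}$-lower semicontinuity then follows by verifying $\tau_{\bar d}$-openness of the superlevel sets $\{z:d(x,z)>\alpha\}$ through the same calculation already used for the complement of $B_d[x,r]$. The two statements for $d(\cdot,y)$ are handled symmetrically, with the roles of $d$ and $\bar d$ interchanged. The only step that is not a routine adaptation of the classical metric arguments is producing the counter-example in part 1; everywhere else the main discipline is to remain vigilant about which of the two distances, $d(\cdot,\cdot)$ or $\bar d(\cdot,\cdot)$, is being controlled at each point.
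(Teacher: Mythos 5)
Your proof is correct. Note that the paper itself gives no proof of this proposition --- it is quoted from the reference \cite{Cobzas} --- so there is nothing to compare against; your argument is the standard one, and each step (the triangle-inequality computations for the balls and for the semicontinuity of $d(x,\cdot)$ and $d(\cdot,y)$, the $T_0$/$T_1$ characterizations via the balls $B_d(x,d(x,y))$, and the counterexample $d(x,y)=\max\{y-x,0\}$ on $\Real$, whose $\tau_d$-open sets are the downward rays) checks out.
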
 %

The following remarks show that imposing too many conditions on a quasi-pseudometric space it becomes  pseudometrizable.

\begin{remark}[\cite{kelly63}] Let $(X,d)$ be  a quasi-metric space. Then
\ben
  \item[\rm (a)]\  if  the mapping $d(x,\cdot):X\to (\Real,|\cdot|)$ is $\tau_d$-continuous for   every $x\in X,$ then the topology $\tau_d$ is regular;
   \item[\rm (b)] if $\tau_d\subseteq \tau_{\bar{d}}$, then the topology $\tau_{\bar{d}}$ is pseudometrizable;
 \item[\rm (c)] if $d(x,\cdot):X\to (\Real,|\cdot|)$ is $\tau_{\bar{d}}$-continuous for every $x\in X,$ then the topology $\tau_{\bar{d}}$ is pseudometrizable.
\een\end{remark}

\begin{remark}
  The characterization of Hausdorff property (or $T_2$) of quasi-metric spaces can be given in terms of uniqueness of the limits, as in the metric case. The topology of a quasi-pseudometric space $(X,d)$ is Hasudorff if and only if every sequence in $X$ has at most one $d$-limit if and only if every sequence in $X$ has at most one $\bar d$-limit (see \cite{wilson31}).

   In the case of an asymmetric normed space   there exists a characterization in terms of the quasi-norm (see \cite{Cobzas}, Propositions 1.1.40).
  \end{remark}

  Recall that a topological space $(X,\tau)$  is called:

 \begin{itemize}
 \item  $T_0$ if for every pair of distinct points  in $X$, at least one of them has a neighborhood   not containing the other;
   \item  $T_1$ if   every pair of distinct points in $X$,  each of them has a neighborhood   not containing the other;
\item    $T_2$ (or {\it Hausdorff}) if  every  two distinct points  in $X$ admit  disjoint  neighborhoods;
\item  {\it regular}  if for every   point $x\in X$ and closed set $A$ not containing $x$ there exist the disjoint open sets $U,V$ such that $x\in U$  and $A\subseteq V.$  \end{itemize}

  \subsection{Completeness in quasi-metric spaces}

   The lack of symmetry in the definition of quasi-metric spaces causes a lot of troubles, mainly concerning
   completeness, compactness and total boundedness in such spaces.
   There are a lot of completeness notions in quasi-metric spaces, all agreeing with the usual notion of
   completeness in the metric case,  each of
   them having its advantages and weaknesses (see \cite{reily-subram82}, or \cite{Cobzas}).

   As in what follows we shall shall work only with two of these  notions, we shall present only them,
   referring to \cite{Cobzas} for   others.

   We use the  notation
   \begin{align*}
  &\Nat=\{1,2,\dots\}  \mbx{ --  the set of natural numbers,}\\
&\Nat_0=\Nat\cup\{0\} \mbx{ --  the set of non-negative integers.}
  \end{align*}

  \begin{defi}\label{def.rKC}
    Let $(X,d)$ be a quasi-pseudometric space. A sequence $(x_n)$ in $(X,d)$ is called:
\bite
\item\;  {\it left $d$-$K$-Cauchy} if  for every $\epsic >0$
   there exists $n_\epsic\in \Nat$ such that
\bequ\label{def.l-Cauchy}\begin{aligned} %
 &\forall n,m, \;\;\mbox{with}\;\; n_\epsic\leq n < m ,\quad d(x_n,x_m)<\epsic\\ %
 \iff &\forall n \geq n_\epsic,\; \forall k\in\Nat,\quad d(x_n,x_{n+k})<\epsic; %
 \end{aligned}\eequ
\item\;   {\it right $d$-$K$-Cauchy} if  for every $\epsic >0$
   there exists $n_\epsic\in \Nat$ such that
\bequ\label{def.r-Cauchy}\begin{aligned} %
 &\forall n,m, \;\;\mbox{with}\;\; n_\epsic\leq n < m ,\quad d(x_m,x_n)<\epsic\\ %
 \iff &\forall n \geq n_\epsic,\; \forall k\in\Nat,\quad d(x_{n+k},x_n)<\epsic. %
 \end{aligned}\eequ\eite
  \end{defi}

The quasi-pseudometric space $(X,d)$ is called:\bite\item\;  {\it sequentially left $d$-$K$-complete} if every left $d$-$K$-Cauchy is
$d$-convergent;
\item\;  {\it sequentially  right $d$-$K$-complete} if every right  $d$-$K$-Cauchy is
$d$-convergent.\eite

\begin{remarks}\hfill\begin{itemize}
\item[{\rm 1.}]
It is obvious that a sequence is left $d$-$K$-Cauchy
   if and only if it is right $\bar{d}$-$K$-Cauchy.
\item[{\rm 2.}] There are examples showing that a $d$-convergent sequence need not  be
left $d$-$K$-Cauchy, showing that in the asymmetric case the situation is far more complicated than in the
symmetric one (see \cite{reily-subram82}).
\item[{\rm 3.}]
  If each convergent sequence in a regular quasi-metric space $(X,d)$
   admits a left $K$-Cauchy subsequence, then $X$ is metrizable
   (\cite{kunzi-reily93}).
   \end{itemize} \end{remarks}

\begin{prop}\label{p1.rKC} Let $(X,d)$ be a quasi-pseudometric space. If a right $K$-Cauchy sequence $ (x_n)$ contains a subsequence convergent to some $x\in X$, then the sequence $(x_n)$ converges to $x$.
\end{prop}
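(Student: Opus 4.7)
The plan is to prove $d$-convergence directly from the definition, i.e., to show that $d(x,x_n)\to 0$, by using the triangle inequality to bridge from $x$ to $x_n$ through an element $x_{n_j}$ of the convergent subsequence, with the subsequence index chosen strictly larger than $n$ so that the right $K$-Cauchy condition supplies the needed estimate in the correct direction.

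More concretely: fix $\varepsilon > 0$. First, apply the right $K$-Cauchy condition with $\varepsilon/2$ to obtain $N_1 \in \Nat$ such that $d(x_m, x_n) < \varepsilon/2$ whenever $N_1 \le n < m$. Second, using the hypothesis that the subsequence $(x_{n_j})$ satisfies $x_{n_j}\xrightarrow{d} x$, i.e.\ $d(x,x_{n_j})\to 0$, choose $J$ with $d(x,x_{n_j}) < \varepsilon/2$ for all $j\ge J$.

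Now for any fixed $n \ge N_1$, since $n_j\to\infty$, we can select $j\ge J$ large enough that $n_j > n$. The pair $(n, n_j)$ satisfies $N_1 \le n < n_j$, so the right $K$-Cauchy estimate gives $d(x_{n_j}, x_n) < \varepsilon/2$. Combining via the triangle inequality (QM2),
\[
d(x, x_n) \;\le\; d(x, x_{n_j}) + d(x_{n_j}, x_n) \;<\; \varepsilon/2 + \varepsilon/2 \;=\; \varepsilon.
\]
This holds for every $n\ge N_1$, so $d(x,x_n)\to 0$, which by \eqref{char-rho-conv1} is exactly $x_n\xrightarrow{d} x$.

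The only point that requires care — and is the place where the asymmetry of $d$ could bite — is the directionality in the triangle step: we need $d(x_{n_j}, x_n)$ small, not $d(x_n, x_{n_j})$. The right $K$-Cauchy condition controls $d(x_m, x_n)$ only when the first index exceeds the second, so the subsequence index $n_j$ must be chosen \emph{after} the target index $n$ rather than before. This is the single subtlety; once the indices are ordered correctly, everything proceeds exactly as in the metric case.
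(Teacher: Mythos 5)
Your proof is correct: the paper states Proposition \ref{p1.rKC} without supplying a proof, and your argument is the natural one, with the one genuinely asymmetric point (that the right $K$-Cauchy condition controls $d(x_m,x_n)$ only for $m>n$, so the subsequence index $n_j$ must be taken \emph{beyond} the target index $n$ before applying $d(x,x_n)\le d(x,x_{n_j})+d(x_{n_j},x_n)$) correctly identified and handled. Nothing further is needed.
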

\begin{remark}
  One can define  more general notions of completeness by replacing in Definition \ref{def.rKC} the sequences with nets. Stoltenberg \cite[Example 2.4]{stolt69} gave an example of a sequentially right $K$-complete $T_1$ quasi-metric space which is not right $K$-complete (i.e., not right $K$-complete by nets).
\end{remark}

\textbf{Convention.} \emph{In the following, when speaking about metric or topological properties in a quasi-pseudometric space $(X,d)$ we shall always  understand those corresponding to $d$ and we shall omit $d$ or $\tau_d$, i.e., we shall write ``$(x_n)$ is right $K$-Cauchy" instead of ``$(x_n)$ is right $d$-$K$-Cauchy",
$\ov A$ instead of $\ov A^d$, etc.}

\subsection{The specialization order in  topological spaces}\label{Ss.top-ord}
Let $(X,\tau)$ be  a topological space. Denote by $\rd{V}(x)$ the family of all neighborhoods of a point $x\in X$. The \emph{specialization order}  in $X$ is the partial order defined by
\begin{equation}\label{def.top-ord}\begin{aligned}
  x\le_\tau y &\iff x\in\overline{\{y\}}\\
  &\iff \forall V\in\rd{V}(x),\; y\in V,
\end{aligned}\end{equation}
that is $y$ belongs to every open set containing $x$.

By a \emph{preorder} on a set $X$ we understand a relation $\le$ on $ X$ such that

(O1)\quad$x\le x$\quad and

(O2)\quad$((x\le y)\wedge (y\le z))\Ra x\le z,$\\
for all $x,y,z\in X.$
If further

(O3)\quad$((x\le y)\wedge (y\le x))\Ra x=y,$\\
then $\le$ is called an \emph{order} on $X$.

\begin{prop}\label{p1.top-ord}
  Let $(X,\tau)$ be a topological space.  Then
  \ben
  \item[\rm (i)] the relation defined by \eqref{def.top-ord} is a preorder on $X$;
   \item[\rm (ii)] it is an order if and only if the topology $\tau$ is $T_0$;
 \item[\rm (iii)] the topology  $\tau$ is $T_1$ if and only if  $\,\le_\tau$ is the equality relation in $X$.
\een\end{prop}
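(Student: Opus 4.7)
The plan is to unpack the two equivalent characterizations of $\le_\tau$ in \eqref{def.top-ord} and verify each assertion directly from them. Throughout, I will use that $x\le_\tau y$ is equivalent to saying every open set (or every neighborhood) containing $x$ also contains $y$; call this property $(\ast)$. This equivalence between $x\in\overline{\{y\}}$ and $(\ast)$ is immediate since $x\in\overline{\{y\}}$ means every neighborhood of $x$ meets $\{y\}$.

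For (i), reflexivity is immediate: $x\in\overline{\{x\}}$. For transitivity, suppose $x\le_\tau y$ and $y\le_\tau z$, and let $U$ be any open set with $x\in U$. By $(\ast)$ applied to $x\le_\tau y$, we have $y\in U$; then $U$ is an open set containing $y$, and applying $(\ast)$ to $y\le_\tau z$ yields $z\in U$. Hence $x\le_\tau z$.

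For (ii), the question is antisymmetry. Assume first that $\tau$ is $T_0$ and that $x\le_\tau y$ and $y\le_\tau x$. If $x\ne y$, the $T_0$ axiom provides a neighborhood of one of them missing the other, directly contradicting $(\ast)$ in one of the two directions. So $x=y$ and $\le_\tau$ is an order. Conversely, assume $\le_\tau$ is antisymmetric and let $x\ne y$. Antisymmetry prevents both $x\le_\tau y$ and $y\le_\tau x$ from holding, so (say) $x\not\le_\tau y$, which by $(\ast)$ yields an open neighborhood of $x$ not containing $y$; that is precisely $T_0$.

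For (iii), assume $\tau$ is $T_1$ and take any $x\ne y$. Then each of $x,y$ has a neighborhood missing the other, so by $(\ast)$ neither $x\le_\tau y$ nor $y\le_\tau x$ holds. Together with reflexivity this gives $x\le_\tau y\iff x=y$, so $\le_\tau$ is the equality relation. Conversely, if $\le_\tau$ is equality and $x\ne y$, then $x\not\le_\tau y$ and $y\not\le_\tau x$; by $(\ast)$ each of $x,y$ has an open neighborhood not containing the other, which is the $T_1$ property. No step looks technically delicate; the only thing to watch is to keep the direction of $(\ast)$ straight (neighborhoods of $x$ contain $y$, not the other way around), so I will state $(\ast)$ once at the start and reuse it consistently.
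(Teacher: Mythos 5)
Your proof is correct and follows essentially the same route as the paper, working directly from the two equivalent formulations of $\le_\tau$ in \eqref{def.top-ord}. The only cosmetic difference is in transitivity, where you use the neighborhood characterization while the paper uses monotonicity and idempotency of closure ($x\in\overline{\{y\}}\subseteq\overline{\overline{\{z\}}}=\overline{\{z\}}$); both arguments are equally valid and elementary.
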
\begin{proof} (i) \; Since $x\in\overline{\{x\}}$ it follows $x\le_\tau x$.

The transitivity follows from the following implication
$$
x\in  \overline{\{y\}}\;\mbox{ and }\; \{y\}\subseteq\overline{\{z\}}\;\Rightarrow\; x\in\overline{\{y\}}\subseteq\overline{\overline{\{z\}}}=\overline{\{z\}}\,,
$$
that is
$$
x\le_\tau y \;\mbox{ and }\;y\le_\tau z \;\Rightarrow\; x\le_\tau z\,.$$

(ii)\; The antisymmetry means that
$$
x\le_\tau y \;\mbox{ and }\;y\le_\tau x \;\Rightarrow\; x=y\,,$$
 or, equivalently,
$$
 x\ne y  \;\Rightarrow\; x\nleq_\tau y \;\mbox{ or }\;y\nleq_\tau x \,,$$
 for all $x,y\in X.$

 But
 \begin{align*}
 x\nleq_\tau y \;\mbox{ or }\;y\nleq_\tau x &\iff x\notin\ov{\{y\}} \;\mbox{ or }\;y\notin\ov{\{x\}}\\
 &\iff \exists V\in\rd{V}(x),\; y\notin V \;\mbox{ or }\; \exists U\in\rd{V}(y),\; x\notin U\\
  &\iff \tau\;\mbx{ is }\; T_0\,.
 \end{align*}

(iii)\; The topological space  $X$ is $T_1$ if and only if  $\,\overline{\{x\}}=\{x\}$ for every $x\in X$. Consequently,
$$
x\le_\tau y\iff x\in \overline{\{y\}}=\{y\}\iff x=y\,,
$$

Conversely,
$$
x\le_\tau y\iff x=y\,,
$$
is equivalent to
\begin{align*}
 x\in \overline{\{y\}}&\iff x=y\,,
 \end{align*}
hence $\overline{\{y\}}=\{y\}$ for all $y\in X$, that is, $\tau$ is $T_1.$
 \end{proof}

  Let $(X,\le)$ be an ordered set. For $A\sse X$ put
 \begin{align*}
   \uparrow\!\! A&=\{y\in X :\exists x\in A,\, x\le y\}\quad\mbx{and}\\
   \downarrow\!\! A&=\{y\in X :\exists x\in A,\, y\le x\}
 \end{align*}

In the following results the  order notions are considered with respect to  the specialization order $\le_\tau$.
\begin{prop}\label{p2.top-ord}
  Let $(X,\tau)$ be a topological space and $A\subseteq X$.
  \begin{enumerate}
  \item[\rm 1.] If the set $A$ is  open, then it is upward closed, i.e. $\uparrow\!\! A=A$.
  \item[\rm 2.] If the set $A$ is  closed, then it is downward closed, i.e. $\downarrow\!\! A=A$.
  \end{enumerate}
\end{prop}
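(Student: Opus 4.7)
The plan is to verify both parts by chasing the two equivalent definitions of the specialization order in \eqref{def.top-ord}: $x\le_\tau y$ means both that $x\in\overline{\{y\}}$ and that $y$ belongs to every neighborhood of $x$. Part 1 will use the second formulation (since open sets are themselves neighborhoods of their points), while part 2 will use the first formulation (since closed sets contain the closures of their singletons).

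For part 1, I would first note the trivial inclusion $A\sse\,\uparrow\!\!A$, which follows from reflexivity of $\le_\tau$. For the reverse inclusion, take $y\in\,\uparrow\!\!A$; by definition there exists $x\in A$ with $x\le_\tau y$. Since $A$ is open and $x\in A$, the set $A$ itself lies in $\rd{V}(x)$, and the second line of \eqref{def.top-ord} forces $y\in A$.

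For part 2, again the inclusion $A\sse\,\downarrow\!\!A$ is immediate. Conversely, if $y\in\,\downarrow\!\!A$, pick $x\in A$ with $y\le_\tau x$, i.e.\ $y\in\overline{\{x\}}$. Since $x\in A$ and $A$ is closed, $\overline{\{x\}}\sse\overline{A}=A$, hence $y\in A$.

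There is no real obstacle here: the argument is a direct unpacking of the definitions, and the two parts are essentially dual (indeed, part 2 for $A$ is equivalent to part 1 for $X\sms A$ once one observes that $\,\uparrow\!\!(X\sms A)=X\sms\!\!\downarrow\!\!A$, which could be used to derive one from the other, though writing both out independently is cleaner).
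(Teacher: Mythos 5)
Your proof is correct and follows essentially the same route as the paper: part 1 uses that an open $A$ is a neighborhood of each of its points together with the neighborhood formulation of $\le_\tau$, and part 2 uses $y\in\overline{\{x\}}\subseteq\overline{A}=A$. The only cosmetic difference is that you spell out the trivial inclusions $A\subseteq\,\uparrow\!\!A$ and $A\subseteq\,\downarrow\!\!A$, which the paper leaves implicit.
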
  \begin{proof}
  1.\; It is a direct consequence of definitions. Let  $x\in A$ and $y\in X,\, x\le_\tau y.$ Since $A$ is open, this inequality   implies $y\in A.$

  2. \;  Let  $x\in A$ and $y\in X,\, y\le_\tau x.$ Then $y\in\overline{\{x\}}\subseteq \overline A=A$.
\end{proof}

Let us define the \emph{saturation} of a subset $A$ of $X$ as the intersection of all open subsets of $X$ containing $A$. The set $A$ is called \emph{saturated} if  equals its saturation.
\begin{prop}\label{p3.top-ord}
  Let $(X,\tau)$ be a topological space.
  \begin{enumerate}
  \item[\rm 1.] For every $x\in X$, $\downarrow\!\! x=\overline{\{x\}}$.
  \item[\rm 2.]  For any subset $A$ of $X$ the saturation of $A$ coincides with $\uparrow\!\! A$.
  \end{enumerate}
\end{prop}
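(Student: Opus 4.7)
My plan is to handle the two parts of Proposition \ref{p3.top-ord} almost entirely by unwinding definitions, with only the harder inclusion in part 2 requiring a short argument by contradiction.

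For part 1, I would simply chain the equivalences coming from \eqref{def.top-ord}: for $y\in X$,
\[
y\in\,\downarrow\!\!x \iff y\le_\tau x \iff y\in\overline{\{x\}},
\]
so $\downarrow\!\!x=\overline{\{x\}}$. This uses only the definition of $\downarrow$ and of the specialization preorder, no topological maneuvering required.

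For part 2, let $S(A)=\bigcap\{U : U\in\tau,\ A\subseteq U\}$ denote the saturation of $A$, and prove the two inclusions separately. For $\uparrow\!\!A\subseteq S(A)$: take $y\in\,\uparrow\!\!A$, pick $x\in A$ with $x\le_\tau y$, and observe from the second line of \eqref{def.top-ord} that $y$ lies in every neighborhood of $x$, hence in every open $U\supseteq A$ (since each such $U$ is a neighborhood of $x$); therefore $y\in S(A)$. For the reverse inclusion $S(A)\subseteq\,\uparrow\!\!A$, I would argue by contraposition: suppose $y\notin\,\uparrow\!\!A$, i.e.\ $x\not\le_\tau y$ for every $x\in A$. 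By \eqref{def.top-ord} this gives, for each $x\in A$, an open neighborhood $U_x$ of $x$ with $y\notin U_x$. Setting $U=\bigcup_{x\in A} U_x$ yields an open set containing $A$ with $y\notin U$, so $y\notin S(A)$.

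The only mildly non-automatic step is the contraposition in part 2, because it requires turning the failure of $x\le_\tau y$ into a concrete open witness $U_x$ for each $x\in A$ and then gluing these witnesses into a single open set avoiding $y$; the union works precisely because openness is preserved under arbitrary unions. Everything else is a direct rewrite of the relevant definitions (of $\overline{\{x\}}$, of $\le_\tau$, of $\uparrow$ and $\downarrow$, and of saturation), so I do not anticipate any genuine obstacle beyond being careful about which line of \eqref{def.top-ord} is being invoked at each step.
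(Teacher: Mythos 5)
Your proof is correct and follows essentially the same route as the paper's: part 1 by unwinding the definitions of $\downarrow$ and $\le_\tau$, and part 2 by the same two inclusions (open sets being upward closed for $\uparrow\!\!A\subseteq S(A)$, and the union of open witnesses $U_x$ for the contrapositive of the reverse inclusion). The only cosmetic difference is that you re-derive the upward closedness of open sets directly from the definition instead of citing Proposition \ref{p2.top-ord}.1 as the paper does.
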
\begin{proof}
 1.\; This follows from the equivalence
 $$
 y\le_\tau x\iff y\in\overline{\{x\}}\,.$$

 2.\; Since every open set is upward closed, $U\in \tau$ and $U\supset A$ implies $U\supset\uparrow\!\! A,$
 that is
 $$
 \uparrow\!\! A\subseteq \bigcap\left\{U\in\tau : A\subseteq U\right\}\,.
 $$

 If $y\notin \uparrow\!\! A$, then for every $x\in A$ there exists $U_x\in \tau$ such that  $x\in U_x$ and  $y\notin U_x.$ It follows $y\notin V:=\bigcup\{U_x : x\in A\}\in\tau$ and $ A\subseteq V$, hence $y\notin \bigcap\{U \in \tau : A\subseteq U\}$, showing that
$$
\complement\left(\uparrow\!\!A\right)\subseteq \complement\left(\bigcap\left\{U\in\tau : A\subseteq U\right\}\right)\iff \bigcap\left\{U\in\tau : A\subseteq U\right\}\subseteq  \uparrow\!\!A\,.$$
\end{proof}

\subsection{Lower semi-continuous functions on quasi-pseudometric spaces}\label{Ss.lsc}
Let  $(X,d)$ be a quasi-pseudometric space.
The specialization order $\le_d$ in $X$ with respect  to the topology $\tau_d$  takes the form
\bequ\label{def.ord-qm}
x\le_dy\iff d(x,y)=0\,,
\eequ
for $x,y\in X.$

Indeed, $$\begin{aligned}
 x\le_d y&\iff x\in\ov{\{y\}}\\ &\iff \forall r>0,\, y\in B_d(x,r)\\&\iff \forall r>0,\, d(x,y)<r\\&\iff d(x,y)=0\,.
\end{aligned}$$

For reader's convenience,  we present some remarks about $\liminf$ and $\limsup$ of sequences in $\Real.$
 Let $(a_n)$ be a sequence in $\Real.$ For $n\in\Nat$ let
 $$\ov{a}_n=\sup\{a_k:\ k\ge n\}
\quad\mbox{and}\quad
\un{a}_n=\inf\{a_k:\ k\ge n\}.$$

It follows
$\ov{a}_{n+1}\le \ov{a}_n$ and $\un{a}_{n+1}\ge \un{a}_n$.

By definition one puts
\begin{align*}
  \limsup_{n\to\infty} a_n&=\lim_{n\to\infty }\,\ov{a}_n=\inf\{\ov{a}_n : n\in\Nat\}
\quad\mbox{and}\\ \liminf_{n\to\infty} a_n&=\lim\limits_{n\to\infty}\,\un{a}_n=\sup\{\un{a}_n : n\in\Nat\}.
\end{align*}

  Note that $\ov{l}=\liminf_n a_n$ and $\un{l}=\liminf_n a_n$
always exist, $\un{l}\le \ov{l}$ and   the sequence $(a_n)$ has
the limit $l$ if and only if
$$\liminf_{n\to\infty}\,a_n=\limsup_{n\to\infty}\,a_n=l.$$

A {\it cluster point} of the sequence $(a_n)$ is a number $x\in
\ov{\mathbb{R}}$ such that $\lim\limits_{k\to\infty }\,a_{n_k}=x$,
for some subsequence $(a_{n_k})$ of $(a_n)$. The numbers $\ov{l}$
and $\un{l}$ are cluster points of the sequence $(a_n)$ and any
other cluster point $\lbda$ of $(a_n)$ satisfies the inequalities
$$\un{l}\le \lbda\le \ov{l}.$$

 A function $f:X\to\Real\cup\{\infty\}$ is called:
 \bite
\item \emph{lower semi-continuous} (lsc) at $x\in X$ if for every sequence $(x_n)$ in $X$ converging to $x$,
\bequ\label{def.lsc}
f(x)\le\liminf_{n\to\infty}f(x_n);
\eequ
 \item \emph{nearly lower semi-continuous} (nearly lsc) at $x\in X$ if \eqref{def.lsc} holds only for  sequences $(x_n)$ with distinct terms converging to $x$;
 \item \emph{lower semi-continuous} (\emph{nearly lower semi-continuous}) on $X$ if it is lsc (nearly lsc) at every $x\in X.$
 \eite

 Obviously, a lsc function is nearly lsc. The notion of nearly lsc function was introduced by Karapinar and Romaguera \cite{karap-romag15} who showed by an example that it is effectively  more general than lsc.
 We call a function  $f:X\to\Real\cup\{\infty\}$ $d$-monotone if
 \bequs
 x\le _d y \Ra f(x)\le f(y)\,,\eequs
 for all $x,y\in X.$
 \begin{prop}\label{p1.lsc-n} Let  $(X,d)$ be a quasi-pseudometric space and $f:X\to\Real\cup\{\infty\}$ a function.
 \ben
 \item[\rm 1.] The function $f$ is nearly lsc at $x\in X$ if and only if \eqref{def.lsc} holds for all sequences $(x_n)$  without constant subsequences such that $\lim_{n\to\infty} x_n=x.$
 \item[\rm 2.] The function $f$ is lsc   if and only if it is nearly lsc and $d$-monotone.
 \item[\rm 3.]  If the topology $\tau_d$ is $T_1$, then any nearly lsc function is lsc.
 \een\end{prop}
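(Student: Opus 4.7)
\medskip

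\textbf{Proof plan.} I would prove the three items in order, with (3) following at once from (2) combined with the characterization of $T_1$ topologies in Proposition \ref{p1.top-ord}(iii).

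For (1), one direction is immediate: a sequence with pairwise distinct terms automatically has no constant subsequence, so the right-hand condition forces near lsc. For the converse, I would start with $(x_n)\to x$ having no constant subsequence, set $L:=\liminf_n f(x_n)$, and dispose of the trivial case $L=\infty$. The key observation I would use is that ``no constant subsequence'' is equivalent to saying that every value in $X$ is attained by the sequence only finitely often. Since the sets $A_k:=\{n : f(x_n)<L+1/k\}$ are infinite, I would inductively pick $n_1<n_2<\cdots$ with $n_k\in A_k$ and $x_{n_k}\notin\{x_{n_1},\ldots,x_{n_{k-1}}\}$; this is possible precisely because the previously chosen values together account for only finitely many indices. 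The resulting $(x_{n_k})$ has pairwise distinct terms, still converges to $x$, and satisfies $f(x_{n_k})<L+1/k$, so the near-lsc hypothesis applied to it yields $f(x)\le L$.

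For (2), the forward direction is easy: lsc trivially implies near lsc, and $d$-monotonicity follows by feeding the constant sequence $x_n\equiv y$ (whenever $d(x,y)=0$, this converges to $x$ by \eqref{char-rho-conv1}) into the lsc inequality. For the converse, assume $f$ is nearly lsc and $d$-monotone, take $x_n\to x$, and extract $(x_{n_k})$ with $f(x_{n_k})\to L:=\liminf_n f(x_n)$. I would then split into two cases. If $(x_{n_k})$ contains a constant subsequence $x_{m_j}\equiv y$, then $d(x,y)=0$ by \eqref{char-rho-conv1}, so $x\le_d y$ and $d$-monotonicity gives $f(x)\le f(y)=L$. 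Otherwise $(x_{n_k})$ itself has no constant subsequence, and part (1) applied to it directly gives $f(x)\le\liminf_k f(x_{n_k})=L$. Part (3) is then immediate: by Proposition \ref{p1.top-ord}(iii) the specialization order $\le_d$ is equality in a $T_1$ space, every function is trivially $d$-monotone, and (2) yields that near lsc coincides with lsc.

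The only real bookkeeping obstacle I anticipate is the inductive extraction in part (1); it hinges entirely on the elementary equivalence ``no constant subsequence $\iff$ each value is attained only finitely often.'' Once this is isolated, parts (2) and (3) reduce to routine case analyses and an application of a previously established result.
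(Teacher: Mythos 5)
Your proposal is correct and follows essentially the same route as the paper: all three parts rest on the same key observations (``no constant subsequence'' means each value occurs only finitely often, the constant-sequence trick yields $d$-monotonicity from lsc, and the converse in part 2 splits on whether the liminf-realizing subsequence contains a constant subsequence, with part 3 then immediate from Proposition \ref{p1.top-ord}(iii)). The only cosmetic difference is in part 1, where the paper extracts the subsequence of first occurrences of the distinct values and shows the two liminfs coincide, whereas you extract a distinct-term subsequence along which $f$ approaches $L=\liminf_n f(x_n)$; both work, though your version should also dispose of the case $L=-\infty$ (e.g.\ by the same extraction with $A_k=\{n : f(x_n)<-k\}$, which contradicts near lower semicontinuity), a case the paper's argument absorbs automatically.
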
\begin{proof}
   1. Suppose that  \eqref{def.lsc} holds for all sequences with distinct terms converging to $x$. Let $(x_n)$ be a sequence without constant subsequences converging to
   $x$. Put $n_1=1$ and define inductively
   $$
   n_{k+1}=\min\{n:x_n\notin\{x_{n_1},\dots,x_{n_k}\}\},\; \; k\in\Nat.$$

   Since every term $x_i$ of the sequence $(x_n)$ appears only finitely many times, the numbers $n_k$ are well defined,
   \begin{align*}
     &n_1<n_2<\dots\\
     &x_{n_i}\ne x_{n_j}\;\mbx{ for }\; i\ne j\;\mbx{ and}\\
     &\{x_n : n\in\Nat\}=\{x_{n_k} :k\in \Nat\}\,.
   \end{align*}
   Let also
   $$
   m_{k}=\max\{i : x_i\in \{x_{n_1},\dots,x_{n_k}\}\},\quad k\in\Nat.$$

   Then $$m_k\le m_{k+1}\quad\mbx{and}\quad m_k\ge n_k\,,$$
   for all $k\in\Nat.$ It follows
   $$
   \{x_j : j>m_k\}\sse \{x_{n_i} : i>k\}\,,$$
   so that
   $$
   \inf\{f(x_j) : j>m_k\}\ge \inf\{f(x_{n_i}) : i>k\}\,,$$
   for all $k\in\Nat.$ Since  $\lim_{k\to\infty}m_k=\infty$,
   \begin{align*}
     \liminf_{k\to\infty}f(x_{n_k})&=\lim_{k\to\infty}\, \inf_{i>k} f(x_{n_i})\\
     &\le \lim_{k\to\infty}\, \inf_{j>m_k}f(x_j)\\
     &=\liminf_{n\to\infty}f(x_n)\le \liminf_{k\to\infty}f(x_{n_k})\,.
   \end{align*}

   Consequently, $$
    f(x)\le \liminf_{k\to\infty}f(x_{n_k})=\liminf_{n\to\infty}f(x_n).$$

   2. We have remarked that  a lsc function is nearly lsc. We show that it  is also $d$-monotone. Indeed, if $x\le_d y$, that is $d(x,y)=0$, then the constant sequence $x_n=y,\, n\in\Nat,$ converges to $x$, so that, by the lsc of the function $f$,
   $$
   f(x)\le\lim_{n\to\infty}f(x_n)=f(y)\,.$$

   Suppose now that $f:X\to\Real\cup\{\infty\}$ is $d$-monotone and nearly lsc. The proof of the fact that it is lsc will be based on the following remark.
  \medskip

  \emph{ Claim }I. \emph{If} $f:X\to\Real\cup\{\infty\}$ \emph{is} $d$-\emph{monotone and}
  \bequ\label{eq0.n-lsc}
  x_n\to x\quad\mbx{{\it and}}\quad  f(x_n)\to \lbda\; \Lra\; f(x)\le\lbda\,,
  \eequ
  \emph{for every sequence $(x_n)$ without constant subsequences,  then \eqref{eq0.n-lsc} holds for arbitrary sequences in} $X$.
  \medskip

  Let $(x_n)_{n\in\Nat}$  be an arbitrary sequence in $X$ such that $x_n\to x$ and $f(x_n)\to\lbda$  as $n\to\infty.$
     If $(x_n)$ contains a constant subsequence, say $x_{n_k}=y,\, k\in \Nat,$  then   $\lim_{k\to\infty}f(x_{n_k})=\lbda$ implies $f(y)=f(x_{n_k})=\lbda$ for sufficiently large $k\in\Nat.$

   Also
     $$
   d(x,y)=\lim_{k\to\infty}d(x,x_{n_k}) =0\,,
   $$
  implies $x\le_d y$, so that, by the $d$-monotony of $f$,
   $$
   f(x)\le f(y)=\lbda =\lim_{n\to\infty}f(x_{n})\,.$$

   Let now $(x_n)$ be an arbitrary sequence in $X$ converging to $x$ and $\lbda=\liminf_{n\to\infty}f(x_n).$ Then there exists a subsequence $(x_{n_k})_{k\in\Nat} $
  of $(x_n)$ such that $\lim_{k\to\infty}f(x_{n_k})=\lbda.$ By Claim I,

  $$
   f(x)\le \lbda =\liminf_{n\to\infty}f(x_{n})\,,$$
   showing that $f$ is lsc.

 3. Suppose that $f:X\to\ov\Real$ is nearly lsc. If the topology $\tau(d)$ is $T_1$, then, by Proposition \ref{p1.top-ord}.(iii), the specialization order is the equality on $X$, hence  the
 function $f $  is $d$-monotone. By  2  this implies that $f$ is lsc.
   \end{proof}

 \subsection{Picard sequences in quasi-pseudometric spaces}\label{Ss.Pic-seq}

 Let $(X,d)$ be a quasi-pseudometric space and $\vphi:X\to\Real\cup\{\infty\}$ a function. For $x\in \dom \vphi:=\{x\in X : \vphi(x)<\infty\}$ define the set $S(x)$ by
\bequs
S(x)=\{y\in X : \vphi(y)+d(y,x)\le \vphi(x)\}\,.\eequs

The function $\vphi$ is called \emph{proper} if $\dom\vphi\ne\ety.$

\begin{prop}\label{p1.Sx}
 The sets $S(x)$ have the following properties:
\bequ\label{eq2.Sx}
\begin{aligned}
  {\rm (i)}\;\; &x\in S(x)\quad\mbx{and}\quad S(x)\sse\dom \vphi;\\
  {\rm (ii)}\;\; &y\in S(x) \Ra \vphi(y)\le \vphi(x)\;\mbx{ and }\; S(y)\subseteq S(x);\\
  {\rm (iii)}\;\; &y\in S(x)\sms\ov{\{x\}} \Ra \vphi(y)<\vphi(x);\\
  {\rm (iv)}\;\; &\mbx{if $\vphi$ is bounded below, then}\\&S(x)\sms \ov{\{x\}}\ne\ety \Ra \vphi(x)>\inf \vphi(S(x));\\
  {\rm (v)}\;\; &\mbx{if $\vphi$ is lsc, then }  S(x)\mbx{ is closed}.
  \end{aligned}\eequ
\end{prop}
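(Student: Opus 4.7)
The plan is to verify the five assertions in turn, each being a short direct deduction from the definition $S(x)=\{y : \vphi(y)+d(y,x)\le\vphi(x)\}$, the triangle inequality (QM2), and the preceding results.

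For (i), since $d(x,x)=0$ the inequality $\vphi(x)+d(x,x)\le\vphi(x)$ is trivial, so $x\in S(x)$; any $y\in S(x)$ satisfies $\vphi(y)\le\vphi(x)-d(y,x)\le\vphi(x)<\infty$, hence $y\in\dom\vphi$. Part (ii) splits in two: dropping the non-negative term $d(y,x)$ from the defining inequality yields $\vphi(y)\le\vphi(x)$, and for $S(y)\sse S(x)$ the triangle inequality gives, for $z\in S(y)$,
$$
\vphi(z)+d(z,x)\le\vphi(z)+d(z,y)+d(y,x)\le\vphi(y)+d(y,x)\le\vphi(x),
$$
so $z\in S(x)$. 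For (iii) I would invoke the description \eqref{def.ord-qm} of the specialization order: $y\in\ov{\{x\}}\iff d(y,x)=0$, so $y\in S(x)\sms\ov{\{x\}}$ forces $d(y,x)>0$ and the defining inequality becomes strict, $\vphi(y)<\vphi(x)$. Item (iv) is then immediate: any $y\in S(x)\sms\ov{\{x\}}$ gives $\inf\vphi(S(x))\le\vphi(y)<\vphi(x)$, the boundedness from below of $\vphi$ merely guaranteeing that the infimum is a real number (the strict inequality itself is the essential content).

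The only step involving a limiting argument is (v). Given $(y_n)\sse S(x)$ with $y_n\to y$, i.e.\ $d(y,y_n)\to 0$, I would combine the lsc of $\vphi$ with the $\tau_d$-lower semicontinuity of $d(\cdot,x)$ supplied by Proposition~\ref{p.top-qsm1}(3), and apply subadditivity of $\liminf$:
$$
\vphi(y)+d(y,x)\le\liminf_n\vphi(y_n)+\liminf_n d(y_n,x)\le\liminf_n\bigl(\vphi(y_n)+d(y_n,x)\bigr)\le\vphi(x),
$$
so $y\in S(x)$. The only point requiring a moment's care is the subadditivity of $\liminf$, which needs the right-hand sum not to be of the form $-\infty+(+\infty)$; this is automatic here, since the codomain $\Real\cup\{\infty\}$ of $\vphi$ forces $\vphi(y)>-\infty$ and hence $\liminf_n\vphi(y_n)\ge\vphi(y)>-\infty$, while $d(y_n,x)\ge 0$. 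No step presents a genuine obstacle; the proposition is essentially a warm-up consolidating the properties of $S(x)$ that will drive the Picard-iteration arguments in the main theorems.
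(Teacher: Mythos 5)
Your proof is correct and follows essentially the same route as the paper: (i)--(iv) are the same direct deductions from the definition of $S(x)$, the triangle inequality, and the identification $y\in\ov{\{x\}}\iff d(y,x)=0$, while your part (v) simply spells out in full the one-line argument the paper gives (lsc of $\vphi$ plus $\tau_d$-lower semicontinuity of $d(\cdot,x)$). The only cosmetic point is that the $\liminf$ estimate you use, $\liminf_n a_n+\liminf_n b_n\le\liminf_n(a_n+b_n)$, is usually called superadditivity rather than subadditivity; the inequality as you wrote it is the correct one.
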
\begin{proof}
    The relations (i) are immediate consequences of  the definition of $S(x)$.

(ii)\; If $y\in S(X)$, then $0\le d(y,x)\le \vphi(x)-\vphi(y)$ implies $\vphi(y)\le \vphi(x).$  Let now $y\in S(x)$ and $z\in S(y)$. Then
  \begin{align*}
  \vphi(z)+d(z,x)&\le [\vphi(z)+ d(z,y)] +d(y,x) \\
 &\le \vphi(y)+d(y,x)\le \vphi(x)\,\qquad (\vphi(z)+ d(z,y)\le \vphi(y)\; \mbx{as } z\in S(y)),
  \end{align*}
showing that  $z\in S(x).$

 (iii) Follows from the inequalities:
$$
0<d(y,x)\le \vphi(x)-\vphi(y)\,.$$

(iv) If there exists $y\in  S(x)\sms\ov{\{x\}}$, then, by (iii),
$$
\inf \vphi(S(x))\le \vphi(y)< \vphi(x)\,.$$

 (v) Follows from the lsc of the function $\vphi$ and the $d$-lsc  of $d(\cdot,x)$.
\end{proof}

 \begin{remark}\hfill\ben
\item[\rm 1.] If $\vphi(x)=\infty$ then $S(x) = X$, so it is natural to consider $S(x)$ only for points $x$ in the domain of $\vphi$.

\item[\rm 2.]   Considering the   order $\preceq$ on $X$ given by
  $$
  x\preceq y \iff \vphi(y)+d(y,x)\le \vphi(x)\,,$$
  we have
  $$
  S(x)=\{y\in X : x\preceq y\}\,.$$

\item[\rm 3.] It is worth to mention that sets of this kind  were used by Penot \cite{penot77} as early as 1977 in a proof of Caristi fixed point theorem in complete metric spaces.
  \een\end{remark}

 A \emph{Picard sequence} corresponding to  a set-valued mapping  $F:X\rightrightarrows X $ is a sequence $(x_n)_{n\in\Nat_0}$ such that
  \bequs
  x_{n+1}\in F(x_n)\;\mbx{ for all }\; n\in\Nat_0\,,
  \eequs
 for a given initial point $x_0\in X.$  This notion was introduced in \cite{dhm83} (see also \cite{bt15}).

  \begin{prop}[Picard sequences]\label{p1.Pic} Let $(X,d)$ be a quasi-pseudometric space and $\vphi:X\to\Real\cup\{\infty\}$ a proper bounded below function.
  For $x\in\dom\vphi$ let
  $$
  S(x)=\{y\in X : \vphi(y)+d(y,x)\le\vphi(x)\}\quad\mbx{and}\q J(x)=\inf\vphi(X)\,.$$
  Let $x_0\in\dom\vphi.$ We distinguish two situations.
  \ben
  \item[\rm 1.] There exists $m\in \Nat_0$ such that
   \bequ\label{eq01.Pic}\begin{aligned}
   {\rm (i)}\quad&\vphi(x_k)>J(x_k); \\
   {\rm (ii)}\quad& x_{k+1}\in S(x_k) \;\mbx{ and }\; \vphi(x_{k+1})<(\vphi(x_k)+J(x_k))/2\\
   {\rm (iii)}\quad& \vphi(x_m)=J(x_m)\,.
   \end{aligned}\eequ
   for all $\,0\le k\le m-1.$

Putting   $z=x_m$,  the following conditions are satisfied:
  \bequ\label{eq02.Pic}\begin{aligned}
   {\rm (i)}\quad& S(x_{k+1})\sse S(x_k)\;\mbx{ and }\; \vphi(x_{k+1}<\vphi(x_k)  \;\mbx{ for all }\; 0\le k\le m-1;\\
   {\rm (ii)}\quad& z\in S(x_k)\;\mbx{and }\; S(z)\sse S(x_k)\;\mbx{ for }\; 0\le k\le m;\\
    {\rm (iii)}\quad& \vphi(y)=\vphi(z)=J(z)\;\mbx{ and} \\
    {\rm (iv)}\quad&    S(y)\sse\ov{\{y\}}\:\mbx{ for all }\; y\in S(z)\,.
  \end{aligned}\eequ

  \item[\rm 2.]  There exists a sequence $(x_n)_{n\in\Nat_0}$ such that
   \bequ\label{eq1.Pic}\begin{aligned}
   {\rm (i)}\quad&\vphi(x_n)>J(x_n);\\
   {\rm (ii)}\quad& x_{n+1}\in S(x_n) \;\mbx{ and }\; \vphi(x_{n+1})<(\vphi(x_n)+J(x_n))/2\,,\\
    \end{aligned}\eequ
   for all $ n\in\Nat_0\,.$

  Then  the sequence $(x_n)_{n\in\Nat_0}$ satisfies the conditions
  \bequ\label{eq2.Pic}  \begin{aligned}
    {\rm (i)}\quad &S(x_{n+1})\sse S(x_n)\quad \mbx{and}\quad \vphi(x_{n+1})< \vphi(x_n)\;\mbx{ for all }\; n\in\Nat_0;\\
    {\rm (ii)}\quad &\mbx{there exist the limits}\;\alpha:= \lim_{n\to\infty}\vphi(x_n)=\lim_{n\to\infty}J(x_n)\in\Real;\\
   {\rm (iii)}\quad &x_{n+k}\in S(x_n)\quad\mbx{for all }\; n,k\in\Nat_0\,;\\
 {\rm (iv)}\quad &  (x_n)_{n\in\Nat_0} \mbx{ is right }\; K\mbx{-Cauchy}\,.
   \end{aligned}\eequ
If the space $ X $ is sequentially right $K$-complete and the function $\vphi$ is nearly lsc,  then the sequence  $(x_n)_{n\in\Nat_0}$
 is convergent to a point $ z\in X $     such that
 \bequ  \label{eq3.Pic}\begin{aligned}
   {\rm (i)}\quad& z\in S(x_n)\quad\mbx{and}\q S(z)\sse S(x_n)\quad\mbx{for all }\; n\in\Nat_0;\\
   {\rm (ii)}\quad& \vphi(y)=\vphi(z)=J(z)=\alpha\;\mbx{ and} \\
   {\rm (iii)}\quad& S(y)\sse\ov{\{y\}} \;   \mbx{ for all }\; y\in S(z)\,,
   \end{aligned}\eequ
  where $\alpha$ is given by \eqref{eq2.Pic}.{\rm(ii)}.
\een\end{prop}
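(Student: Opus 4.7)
The plan is to treat the two cases separately, relying throughout on the properties of the set-valued map $S$ from Proposition \ref{p1.Sx}; the single most useful fact is the nesting property, namely that $y\in S(x)$ implies $\vphi(y)\le\vphi(x)$ and $S(y)\sse S(x)$.

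For Case 1, every statement in \eqref{eq02.Pic} results from iterating this nesting. From $x_{k+1}\in S(x_k)$ one immediately gets $S(x_{k+1})\sse S(x_k)$ and $\vphi(x_{k+1})\le\vphi(x_k)$, while the strict decrease is forced by $\vphi(x_{k+1})<(\vphi(x_k)+J(x_k))/2<\vphi(x_k)$ (valid because $J(x_k)<\vphi(x_k)$). Iteration then yields $z=x_m\in S(x_{m-1})\sse\cdots\sse S(x_k)$ and $S(z)\sse S(x_k)$ for $0\le k\le m$. For part (iii), the hypothesis $\vphi(x_m)=J(x_m)$ says that $\vphi$ cannot be lowered inside $S(x_m)$, so for every $y\in S(z)\sse S(x_m)$ the chain $J(x_m)\le\vphi(y)\le\vphi(z)=\vphi(x_m)=J(x_m)$ forces $\vphi(y)=\vphi(z)$ and then $J(z)=\vphi(z)$. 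For (iv), given $y\in S(z)$ and $y'\in S(y)\sse S(z)$, part (iii) already gives $\vphi(y')=\vphi(y)$, so $\vphi(y')+d(y',y)\le\vphi(y)$ forces $d(y',y)=0$, i.e.\ $y'\in\ov{\{y\}}$.

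For Case 2, parts (i) and (iii) of \eqref{eq2.Pic} again follow from the nesting by induction on $n$. Since $(\vphi(x_n))$ is strictly decreasing and bounded below it converges to some $\alpha\in\Real$, in particular $\vphi(x_n)-\vphi(x_{n+1})\to 0$; rewriting the hypothesis as $\vphi(x_n)-\vphi(x_{n+1})>\tfrac12(\vphi(x_n)-J(x_n))$ then forces $\vphi(x_n)-J(x_n)\to 0$, which together with the monotonicity $J(x_n)\uparrow$ gives (ii). For (iv) it suffices to note that $x_{n+k}\in S(x_n)$ implies $d(x_{n+k},x_n)\le\vphi(x_n)-\vphi(x_{n+k})\le\vphi(x_n)-\alpha<\epsic$ for $n$ large, proving the right $K$-Cauchy property.

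Sequential right $K$-completeness now provides a limit $z$ of $(x_n)$, and the heart of the proof is to verify $z\in S(x_n)$, that is, to pass to the limit in $\vphi(x_{n+k})+d(x_{n+k},x_n)\le\vphi(x_n)$ as $k\to\infty$ with $n$ fixed; this is the step I expect to be the main obstacle. Strict monotonicity of $\vphi(x_n)$ forces all $x_n$ to be pairwise distinct, so the tail $(x_{n+k})_{k\ge 1}$ has distinct terms and near lsc of $\vphi$ yields $\vphi(z)\le\liminf_k\vphi(x_{n+k})=\alpha$; the $\tau_d$-lsc of $d(\cdot,x_n)$ (Proposition \ref{p.top-qsm1}.3) yields $d(z,x_n)\le\liminf_k d(x_{n+k},x_n)$; and because $\vphi(x_{n+k})\to\alpha$ is a genuine limit (not merely a $\liminf$) it can be pulled out of the $\liminf$ of the sum, giving $\vphi(z)+d(z,x_n)\le\vphi(x_n)$, hence $z\in S(x_n)$ and $S(z)\sse S(x_n)$ by Proposition \ref{p1.Sx}.(ii). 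The chain $J(x_n)\le J(z)\le\vphi(z)\le\alpha$ combined with $J(x_n)\to\alpha$ pins down $\vphi(z)=J(z)=\alpha$; the equalities $\vphi(y)=\alpha$ for $y\in S(z)$ and the inclusion $S(y)\sse\ov{\{y\}}$ then follow exactly as in Case 1.(iii)--(iv).
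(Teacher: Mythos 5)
Your proposal is correct and follows essentially the same route as the paper: the nesting property $y\in S(x)\Rightarrow\vphi(y)\le\vphi(x),\ S(y)\sse S(x)$, the squeeze $2\vphi(x_{n+1})-\vphi(x_n)<J(x_n)<\vphi(x_n)$ for identifying $\alpha$, the estimate $d(x_{n+k},x_n)\le\vphi(x_n)-\vphi(x_{n+k})$ for the right $K$-Cauchy property, and the combination of near lower semicontinuity of $\vphi$ (applicable because strict decrease of $\vphi(x_n)$ makes the terms pairwise distinct) with the $\tau_d$-lower semicontinuity of $d(\cdot,x_n)$ to pass to the limit and obtain $z\in S(x_n)$. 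The only step you do not address is the preliminary (trivial, inductive) observation that one of the two situations must occur for any starting point $x_0\in\dom\vphi$; otherwise the argument matches the paper's.
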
\begin{proof} Suppose that we have found $x_0,x_1,\dots,x_{m}$  satisfying the conditions (i) and (ii) from \eqref{eq01.Pic}.
If $\vphi(x_m)=J(x_m)$ then   $x_0,x_1,\dots,x_{m}$ satisfy   \eqref{eq01.Pic}.

If $\vphi(x_m)>J(x_m)$, then there exists $x_{m+1}\in S(x_m)$ such that $\vphi(x_{m+1})<(\vphi(x_n)+J(x_n))/2\,.$ Supposing that this procedure continues indefinitely, we find a sequence $(x_n)_{n\in\Nat_0}$ satisfying \eqref{eq1.Pic}.

1. Suppose that $x_0,x_1,\dots,x_{m}$ satisfy   \eqref{eq01.Pic} and let $z=x_m\,.$ Then, by \eqref{eq2.Sx}.(ii), $x_{k+1}\in S(x_k)$ implies $S(x_{k+1})\sse S(x_k)$ for $k=0,1,\dots,m-1.$

If $y\in S(z),$ then, by \eqref{eq2.Sx}.(ii),
$$
J(z)\le\vphi(y)\le\vphi(z)=J(z)\,.$$

It follows  $\vphi(y)=\vphi(z)=J(z)$ for all $y\in S(z).$

If $y\in S(z)$ and $x\in S(y)\sse S(z)$, then
$$
d(x,y)\le \vphi(y)-\vphi(x)=0\,,$$
so that $x\in \ov{\{y\}}$ and $S(y)\sse\ov{\{y\}}.$

We have shown that $z$ satisfies \eqref{eq02.Pic}.
\smallskip

  2. Suppose now that the sequence $(x_n)_{n\in\Nat_0} $ satisfies \eqref{eq1.Pic}.

By \eqref{eq2.Sx}.(ii), the relation $x_{n+1}\in S(x_n)$ implies
\bequs
S(x_{n+1})\sse S(x_n)\quad\mbx{for all } \; n\in\Nat_0.\eequs

Also, by \eqref{eq1.Pic},
$$
\vphi(x_{n+1})<\vphi(x_n)\quad\mbx{for all } \; n\in\Nat_0,$$
so \eqref{eq2.Pic}.(i) holds.

Since $\vphi$ is bounded below and, by (i),  $(\vphi(x_n))_{n\in\Nat_0}$ is strictly decreasing, there exists the limit
$$\alpha:=\lim_{n\to\infty}\vphi(x_n)=\inf_{n\in\Nat_0}\vphi(x_n)\in\Real\,.$$

By \eqref{eq1.Pic},
$$
2\vphi(x_{n+1})-\vphi(x_{n})<J(x_n)<\vphi(x_{n})\,,$$
for all $n\in\Nat_0.$ Letting  $n\to\infty$, one obtains
\bequ\label{eq4.Ek1-qm}
\lim_{n\to\infty}J(x_n)=\lim_{n\to\infty}\vphi(x_n)=\alpha\,.
\eequ

By  (i),
$$x_{n+k}\in S(x_{n+k})\sse S(x_n)\,,$$
proving (iii).

By (iii),
\bequ\label{eq5.Pic}
d(x_{n+k},x_{n})\le\vphi(x_{n})-\vphi(x_{n+k})\quad\mbx{for all }\; n,k\in\Nat_0\,.\eequ
 Since the sequence  $(\vphi(x_n))_{n\in\Nat_0}$  is Cauchy, this implies that $(x_n)_{n\in\Nat_0}$ is right $K$-Cauchy.

Suppose now that $X$ is sequentially  right $K$-complete and $\vphi$ is nearly lsc.

Taking into account the fact  that the function  $d(\cdot,x_n)$ is   lsc (Proposition \ref{p.top-qsm1}.3) and the sequence $(x_n)_{n\in\Nat_0}$ has pairwise distinct terms, the inequalities \eqref{eq5.Pic} yield
$$
d(z,x_n)+\vphi(z)\le\liminf_{k\to\infty}[\vphi(x_{n+k})+d(x_{n+k},x_n)]\le\vphi(x_n)\,,$$
which shows that
$
z\in S(x_n)$ and so $S(z)\sse S(x_n)$,  for all $n\in\Nat_0$.

If $y\in S(z)\sse S(x_n),$ then
$$
J(x_n)\le J(z)\le \vphi(y)\le\vphi(y)+d(y,x_n)\le\vphi(x_n)\,,$$
that is
$$
J(x_n)\le J(z)\le \vphi(y)\le \vphi(x_n)\,,$$
for all $n\in\Nat_0\,.$

Letting $n\to \infty$ and taking into account \eqref{eq2.Pic}.(ii), one obtains
$$
\vphi(y)=J(z)\,,$$
for all $y\in S(z).$

The proof of the inclusion \eqref{eq3.Pic}.(iii) is similar to that of \eqref{eq02.Pic}.(iii).
\end{proof}

\begin{remark}\label{re2.Pic} If the function $\vphi$ satisfies
$$
\vphi(x)>\inf\vphi(S(x)) \;\mbx{ for all }\; x\in \dom\vphi\,,$$
then, for every $x_0\in\dom\vphi$, there exists a Picard sequence $(x_n)_{n\in\Nat_0}$ satisfying \eqref{eq1.Pic} (and so \eqref{eq2.Pic} as well).

\end{remark}

 \section{ Ekeland, Takahashi and Caristi  principles in quasi-pseudometric spaces}\label{S.T0-qm}

 Along this  section we shall use the notation: for  a quasi-pseudometric space $X$,  a function $\vphi:X\to\Real\cup\{\infty\}$ and $x\in X$ put
\bequ\label{def.Sx-Jx}
S(x)=\{y\in X : \vphi(y)+d(y,x)\le \vphi(x)\}\quad\mbx{and}\q J(x)=\inf \vphi(S(x))\,.\eequ
Ekeland,  Takahashi and Caristi principles (see Theorem \ref{t1.Ek-Taka-Car})  can be expressed in terms of the sets $S(x)$ in the following form.
\begin{theo}\label{t2.Ek-Taka-Car} Let $(X,d)$ be a  complete  metric space and $\vphi:X\to\Real\cup\{\infty\}$ a proper bounded below  lsc function.
Then the following hold.
\ben
\item[\rm (wEk)] There exists $z\in X$ such that $S(z)=\{z\}.$
\item[\rm (Tak)] If $S(x)\sms\{x\}\ne \ety$ whenever $\vphi(x)>\inf\vphi(X),$ then $\vphi$ attains its minimum on $X$, i.e., there  exists
$z\in X$ such that $\vphi(z)=\inf\vphi(X).$
\item[\rm (Car)] If the mapping  $T:X\to X$ satisfies $Tx\in S(x) $ for all $x\in X,$ then $T$ has a fixed point in $X$, i.e.,   there  exists
$z\in X$ such that $Tz=z.$
\een\end{theo}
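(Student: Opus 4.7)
The plan is to derive Theorem~\ref{t2.Ek-Taka-Car} as a direct notational reformulation of Theorem~\ref{t1.Ek-Taka-Car}, using the set-valued notation $S(x)$ from \eqref{def.Sx-Jx}. Since Theorem~\ref{t1.Ek-Taka-Car} is stated as a known classical result (due to Ekeland, Takahashi and Caristi respectively), it suffices to verify that each of the three assertions in Theorem~\ref{t2.Ek-Taka-Car} is logically equivalent to the corresponding assertion in Theorem~\ref{t1.Ek-Taka-Car}.

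First I would unpack (wEk). Since $z\in S(z)$ always holds (cf.\ Proposition~\ref{p1.Sx}(i)), the equality $S(z)=\{z\}$ is equivalent to the nonexistence of any $y\ne z$ satisfying $\vphi(y)+d(y,z)\le\vphi(z)$, that is, to the strict inequality $\vphi(z)<\vphi(y)+d(y,z)$ for every $y\ne z$. Invoking the symmetry $d(y,z)=d(z,y)$ available in a metric space and renaming $y$ as $x$, this is precisely the inequality $\vphi(z)<\vphi(x)+d(x,z)$ from (wEk) of Theorem~\ref{t1.Ek-Taka-Car}.

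Next, for (Tak), the condition $S(x)\sms\{x\}\ne\ety$ unpacks immediately to the existence of a point $y\in X\sms\{x\}$ with $\vphi(y)+d(y,x)\le\vphi(x)$; again by the symmetry of $d$ this matches the hypothesis of Takahashi's principle, and the conclusion $\vphi(z)=\inf\vphi(X)$ is unchanged. Finally, for (Car), the inclusion $Tx\in S(x)$ is, by the very definition of $S(x)$, the inequality $\vphi(Tx)+d(Tx,x)\le\vphi(x)$, which is the Caristi hypothesis in Theorem~\ref{t1.Ek-Taka-Car} verbatim; the conclusion (existence of a fixed point of $T$) is the same.

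No genuine obstacle arises in the argument, which reduces to bookkeeping. The only point worth flagging is the discrepancy between the $d(x,y)$ appearing in Theorem~\ref{t1.Ek-Taka-Car} and the $d(y,x)$ arising from the definition of $S(x)$ in \eqref{def.Sx-Jx}: here the two are interchangeable thanks to metric symmetry, but in the quasi-pseudometric extensions developed later in Section~\ref{S.T0-qm} the distinction will become substantive, which both explains the choice of convention in defining $S(x)$ and motivates recasting the classical statements in this set-valued form before attempting the asymmetric generalisation.
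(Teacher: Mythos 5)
Your proposal is correct and matches the paper's treatment: the paper states Theorem~\ref{t2.Ek-Taka-Car} without proof, as an evident rewriting of Theorem~\ref{t1.Ek-Taka-Car} via the definition of $S(x)$, which is exactly the bookkeeping you carry out. (A minor point: metric symmetry is genuinely needed only in (Tak), where $d(x,y)$ in Theorem~\ref{t1.Ek-Taka-Car} must be matched with the $d(y,x)$ in the definition of $S(x)$; in (wEk) and (Car) the orders of the arguments already agree verbatim.)
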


In the following we shall prove some quasi-pseudometric versions of these results.

\subsection{Ekeland variational principle}
We start by a version of weak Ekeland principle.
\begin{theo}\label{t.Ek1-qm} Let $(X,d)$ be a  sequentially  right $K$-complete quasi-pseudometric space and $\vphi:X\to\Real\cup\{\infty\}$ a proper bounded below nearly lsc function. Then there exists $z\in X$ such that
\bequ\label{eq1.Ek1-qm}\begin{aligned}
    \vphi(y)=\vphi(z) \;\mbx{ for all  }\; y\in S(z)\,.
\end{aligned}\eequ

In this case it follows that, for every  $y\in S(z)$,
\bequ\label{eq1b.Ek1-qm}\begin{aligned}
  {\rm (i)}\quad& S(y)\sse\ov{\{y\}}\quad\mbx{and}\\
   {\rm (ii)}\quad& \vphi(y)<\vphi(x)+d(x,y)\quad\mbx{for all }\; x\in X\sms S(y)\,.
\end{aligned}\eequ

\end{theo}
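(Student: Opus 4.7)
The plan is to reduce the theorem directly to Proposition \ref{p1.Pic}. Since $\vphi$ is proper, fix any $x_0\in\dom\vphi$ and attempt to build a Picard sequence for the set-valued map $S$: at step $n$, if $\vphi(x_n)>J(x_n)$, choose $x_{n+1}\in S(x_n)$ with $\vphi(x_{n+1})<(\vphi(x_n)+J(x_n))/2$ (possible by the definition of infimum); otherwise halt. This is precisely the alternative set up in Proposition \ref{p1.Pic}.

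Two cases arise. If the procedure halts at some $x_m$ with $\vphi(x_m)=J(x_m)$, then part 1 of Proposition \ref{p1.Pic} applies and $z:=x_m$ satisfies \eqref{eq02.Pic}; in particular $\vphi(y)=\vphi(z)$ and $S(y)\sse\ov{\{y\}}$ for every $y\in S(z)$. Otherwise the construction produces an infinite sequence satisfying \eqref{eq1.Pic}; since $X$ is sequentially right $K$-complete and $\vphi$ is nearly lsc, part 2 of the same Proposition furnishes a limit $z$ for which \eqref{eq3.Pic} holds, yielding the same two conclusions. This settles \eqref{eq1.Ek1-qm} and part (i) of \eqref{eq1b.Ek1-qm} at once.

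Part (ii) of \eqref{eq1b.Ek1-qm} is then immediate from the definition of $S(y)$: for $x\in X\sms S(y)$ the relation $\vphi(x)+d(x,y)\le\vphi(y)$ fails, so $\vphi(y)<\vphi(x)+d(x,y)$, which is exactly the asserted inequality.

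The only subtle point is already absorbed into Proposition \ref{p1.Pic}: in Case~2 the sequence $(x_n)$ has pairwise distinct terms (forced by the strict decrease of $\vphi(x_n)$), so the near lower semicontinuity of $\vphi$ can legitimately be applied; combined with the $\tau_d$-lower semicontinuity of $d(\cdot,x_n)$ from Proposition~\ref{p.top-qsm1}.3, this allows one to pass to the limit in $d(x_{n+k},x_n)\le\vphi(x_n)-\vphi(x_{n+k})$ and conclude $z\in S(x_n)$. Beyond invoking this, the proof is pure bookkeeping, so I do not anticipate any further obstacle.
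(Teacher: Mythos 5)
Your proof is correct and follows essentially the same route as the paper: both reduce the existence of $z$ satisfying \eqref{eq1.Ek1-qm} to the dichotomy of Proposition \ref{p1.Pic} (conclusions \eqref{eq02.Pic} in the halting case, \eqref{eq3.Pic} in the infinite case, with near lower semicontinuity applicable because the strictly decreasing values force pairwise distinct terms), and both obtain \eqref{eq1b.Ek1-qm} as an immediate consequence, part (ii) directly from the definition of $S(y)$.
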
\begin{proof}
By Proposition \ref{p1.Pic},  \eqref{eq02.Pic}.(iii) and   \eqref{eq3.Pic}.(ii), there exists $z\in X$ satisfying \eqref{eq1.Ek1-qm}.

Let us  prove  now  that \eqref{eq1.Ek1-qm} implies  \eqref{eq1b.Ek1-qm}.

If $y\in S(z)$ and $x\in S(y)\sse S(z)$, then, by \eqref{eq1.Ek1-qm},
$$
d(x,y)\le\vphi(y)-\vphi(x)=0\,,$$
so that, $x\in \ov{\{y\}},$ that is,  $S(y)\sse \ov{\{y\}}.$

If $x\in X\sms S(y)$, then, by the definition of the set $S(y)$,
$$
\vphi(y)<\vphi(x)+d(x,y)\,.$$
\end{proof}
\begin{remark} In \cite{karap-romag15} the following form of  the weak form of  EkVP is proved: Under the hypotheses of
Theorem \ref{t.Ek1-qm} there exists $z\in X$ such that
\bequ\label{wEk-KR}\begin{aligned}
  {\rm (i)}\quad& \vphi(z)\le \vphi(x)\;\mbx{ for   }\; x\in\ov{\{z\}}\;\mbx{ and}\\
  {\rm (ii)}\quad& \vphi(z)< \vphi(x)+d(x,z)\;\mbx{ for  }\; x\in X\sms \ov{\{z\}}\,.
\end{aligned}\eequ

Since  $S(z)\sse \ov{\{z\}}$, the relations  \eqref{eq1.Ek1-qm} and \eqref{eq1b.Ek1-qm} can be rewritten in the  form
\begin{align*}
{\rm (i')}\quad& \vphi(z)= \vphi(x)\;\mbx{ for   }\; x\in S(z);\\
{\rm (i'')}\quad& \vphi(z)< \vphi(x)\;\mbx{ for   }\; x\in\ov{\{z\}}\sms S(z)\;\mbx{ and}\\
  {\rm (ii)}\quad& \vphi(z)< \vphi(x)+d(x,z)\;\mbx{ for  }\; x\in X\sms \ov{\{z\}}\,,
  \end{align*}
  i.e., \eqref{wEk-KR}.(i) splits into (i$'$) and   (i$''$).

\end{remark}

Supposing that $\vphi$ is lsc  one can obtain  the full version of Ekeland variational principle.

\begin{theo}\label{t.Ek2-qm} Let $(X,d)$ be a sequentially right $K$-complete quasi-pseudometric space and $\vphi:X\to\Real\cup\{\infty\}$ a proper bounded below  lsc function.
Let $\epsic,\lbda >0$ and let $x_0\in X$ be such that
\bequ\label{eq1.Ek2-qm}
\vphi(x_0)\le\epsic+\inf\vphi(X)\,.
\eequ

Then there exists $z\in X$ such that
\bequ\label{eq2.Ek2-qm}\begin{aligned}
{\rm (i)}\quad&  \vphi(z)+\frac{\epsic}{\lbda}d(z,x_0)\le\vphi(x_0)\q(\mbx{and so }\; \vphi(z)\le\vphi(x_0));\\
{\rm (ii)}\quad& d(z,x_0)\le\lbda;\\
{\rm (iii)}\quad& \vphi(y)=\vphi(z) \;\mbx{ for all  }\; y\in S(z);\\
{\rm (iv)}\quad& \vphi(z)<\vphi(x)+\frac{\epsic}{\lbda}d(x,z)\quad\mbx{for all }\; x\in X\sms S(z)\,.
\end{aligned}\eequ\end{theo}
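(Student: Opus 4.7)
The plan is to reduce to the weak form already proved in Theorem \ref{t.Ek1-qm} via the classical rescale-and-restrict trick. First I replace $d$ by the quasi-pseudometric $d':=\frac{\epsic}{\lbda}d$; rescaling by a positive constant preserves $\tau_d$ as well as the family of right $K$-Cauchy sequences, so $(X,d')$ is again sequentially right $K$-complete, and the four target inequalities turn into their unscaled counterparts written in terms of $d'$. Next I pass to
\[
Y:=\{x\in X:\ \vphi(x)+d'(x,x_0)\le\vphi(x_0)\},
\]
i.e.\ the $d'$-version of $S(x_0)$. This set contains $x_0$ and, since $\vphi$ is lsc and $d'(\cdot,x_0)$ is $\tau_d$-lsc by Proposition \ref{p.top-qsm1}.3, the sum $\vphi(\cdot)+d'(\cdot,x_0)$ is $\tau_d$-lsc, so $Y$ is $\tau_d$-closed. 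Consequently $(Y,d'|_Y)$ is sequentially right $K$-complete (a right $K$-Cauchy sequence in $Y$ is right $K$-Cauchy in $X$, its limit lies in $Y$ by closedness), and $\vphi|_Y$ remains proper, bounded below and lsc, hence nearly lsc.

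Applying Theorem \ref{t.Ek1-qm} inside $(Y,d'|_Y)$ yields $z\in Y$ satisfying $\vphi(y)=\vphi(z)$ for every $y$ in $S^Y_{d'}(z):=\{y\in Y:\vphi(y)+d'(y,z)\le\vphi(z)\}$ and $\vphi(z)<\vphi(x)+d'(x,z)$ for every $x\in Y\sms S^Y_{d'}(z)$. Item (i) is just the membership $z\in Y$. For (ii), the hypothesis $\vphi(x_0)\le\epsic+\inf\vphi(X)$ together with
\[
d'(z,x_0)\le\vphi(x_0)-\vphi(z)\le\vphi(x_0)-\inf\vphi(X)\le\epsic
\]
and $d'=\frac{\epsic}{\lbda}d$ give $d(z,x_0)\le\lbda$.

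The main nontrivial step is to promote the conclusion from $Y$ back to all of $X$. The key observation is that $S_{d'}(z)$ (computed in $X$) is already contained in $Y$: for $y\in S_{d'}(z)$, combining $\vphi(y)+d'(y,z)\le\vphi(z)$ with $z\in Y$ and the triangle inequality yields
\[
\vphi(y)+d'(y,x_0)\le\vphi(y)+d'(y,z)+d'(z,x_0)\le\vphi(z)+d'(z,x_0)\le\vphi(x_0),
\]
so $y\in Y$. Hence $S_{d'}(z)=S^Y_{d'}(z)$ and (iii) follows at once. For (iv), I split $x\in X\sms S_{d'}(z)$ in two cases: if $x\in Y$, the strict inequality is exactly the weak Ekeland conclusion inside $Y$; if $x\notin Y$, then $\vphi(x)+d'(x,x_0)>\vphi(x_0)\ge\vphi(z)+d'(z,x_0)$, which together with the triangle inequality $d'(x,x_0)\le d'(x,z)+d'(z,x_0)$ gives $\vphi(z)<\vphi(x)+d'(x,z)$. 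The only delicate point is this last case-split; the absence of symmetry causes no difficulty because the estimates invoke $d'(y,x_0)$, $d'(y,z)$ and $d'(z,x_0)$ always in the ``forward'' order in which the triangle inequality is available.
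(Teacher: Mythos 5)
Your proof is correct, and it follows the same overall strategy as the paper's: rescale $d$ to $d_\gamma=\frac{\epsic}{\lbda}d$, localize to a closed subset containing $x_0$, and then invoke the weak form. The genuine difference is the choice of localizing set. The paper takes the larger set $X_0=\{x\in X:\vphi(x)\le\vphi(x_0)+d_\gamma(x_0,x)\}$, proves (Claim II) that $S(y)=S_{X_0}(y)$ for every $y\in X_0$ via a strict-inequality argument, and obtains (i) by rerunning the Picard construction of Proposition \ref{p1.Pic} inside $X_0$ and tracking that the sequence starts at $x_0$ (so the limit $z$ lies in $S_{X_0}(x_0)$). You instead take $Y=S_{d_\gamma}(x_0)$ itself, which makes (i) automatic from the mere membership $z\in Y$ and lets you quote Theorem \ref{t.Ek1-qm} as a black box; this is a mild but real simplification. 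Your triangle-inequality verification that $S_{d_\gamma}(z)\sse Y$ is exactly \eqref{eq2.Sx}.(ii) of Proposition \ref{p1.Sx} ($z\in S(x_0)\Ra S(z)\sse S(x_0)$), so that step could simply be cited. Two minor remarks: (iv) is immediate from the definition of $S_{d_\gamma}(z)$ as a subset of all of $X$ (if $x\notin S_{d_\gamma}(z)$ then $\vphi(x)+d_\gamma(x,z)>\vphi(z)$ by definition), so your case split on $x\in Y$ versus $x\notin Y$, while valid, is unnecessary --- the paper dispatches (iv) in one line this way; and you should record that $\vphi(x_0)<\infty$ (which follows from \eqref{eq1.Ek2-qm} together with properness and boundedness below of $\vphi$), so that $x_0\in\dom\vphi$ and $\vphi|_Y$ is indeed proper.
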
\begin{proof}
For convenience, put $\gamma=\epsic/\lbda$ and $d_\gamma=\gamma d.$ Then $d_{\gamma}$ is a quasi-pseudometric on $X$ Lipschitz equivalent to $d$, so that $(X,d_{\gamma})$ is sequentially right $K$-complete too.

Let
$$
X_0=\{x\in X : \vphi(x)\le \vphi(x_0)+d_{\gamma}(x_0,x)\}\,.$$

\emph{Claim} I. \emph{The set} $X_0$ \emph{is closed and} $x_0\in X_0\,.$\smallskip

Indeed, let $(x_n)$ be a sequence in $X_0$, $d_{\gamma}$-convergent to some $x\in X$, i.e., $\lim_{n\to\infty}d_{\gamma}(x,x_n)=0.$ Then
\begin{align*}
  \vphi(x_n) &\le \vphi(x_0)+d_{\gamma}(x_0,x_n)\\
  &\le \vphi(x_0)+d_{\gamma}(x_0,x)+d_{\gamma}(x,x_n)\,,
\end{align*}
for all $n\in\Nat.$ Taking into account the lsc of the function $\vphi$, one obtains
$$
\vphi(x)\le\liminf_{n\to\infty}\vphi(x_n)\le \vphi(x_0)+d_{\gamma}(x_0,x)\,,$$
which shows that $x\in X_0.$

It is obvious that $x_0\in X_0\,.$

For $y\in X_0$ put
$$
 S_{X_0}(y):=\{x\in X_0 : \vphi(x)+d_{\gamma}(x,y)\le \vphi(y)\}=X_0\cap S(y)\,.$$

\emph{Claim} II. \emph{For every} $y\in X_0  $  \emph{ and} $x\in X\sms X_0,$
\bequ\label{eq3.Ek2-qm}\begin{aligned}
 {\rm (i)}\quad& \vphi(y) <\vphi(x)+d_{\gamma}(x,y)\;\mbx{ and}\\
  {\rm (ii)}\quad& S_{X_0}(y)=\{x\in X : \vphi(x)+d_{\gamma}(x,y)\le \vphi(y)\}=S(y)\,.
\end{aligned}\eequ

Indeed,
$$
x\in X\sms X_0\iff \vphi(x_0)+d_{\gamma}(x_0,x)<\vphi(x)\,,$$
so that, taking into account that $y\in X_0$, we obtain
\begin{align*}
  \vphi(y) &\le \vphi(x_0)+d_{\gamma}(x_0,y)\\
   &\le \vphi(x_0)+d_{\gamma}(x_0,x)+d_{\gamma}(x,y)\\
   &<\vphi(x)+d_{\gamma}(x,y)\,.
\end{align*}

The inequality \eqref{eq3.Ek2-qm}.(i) implies that
\begin{align*}
  S_{X_0}(y)&=\{x\in X_0 : \vphi(x)+d_{\gamma}(x,y)\le \vphi(y)\}\\
  &=\{x\in X : \vphi(x)+d_{\gamma}(x,y)\le \vphi(y)\} =S(y)\,.
\end{align*}

Applying  Proposition \ref{p1.Pic}  to $(X_0,d_{\gamma})$ we find an element $z\in X_0$ such that
\bequ\label{eq4.Ek2-qm}
\vphi(y)=\vphi(z)\;\mbx{ for all }\; y\in S_{X_0}(z)=S(z)\,.
\eequ

This shows that $z$ satisfies  \eqref{eq2.Ek2-qm}.(iii).

Now, by \eqref{eq02.Pic}.(ii)and \eqref{eq3.Pic}.(i), $z\in S_{X_0}(x_0)$ which is equivalent to \eqref{eq2.Ek2-qm}.(i).

Also, \eqref{eq2.Ek2-qm}.(i)  and \eqref{eq1.Ek2-qm} imply
$$
\frac{\epsic}{\lbda}d(z,x_0)\le \vphi(x_0)-\vphi(z)\le\epsic\,,$$
so that
$$
d(z,x_0)\le  \lbda\,,$$
i.e., \eqref{eq2.Ek2-qm}.(ii) holds too.

 The inequality  \eqref{eq3.Pic}.(iv) follows from  the definition of the set $S(z).$
 \end{proof}

\subsection{Takahashi principle}

 \begin{theo}\label{t1.Taka-qm}
Let $(X,d)$ be a sequentially  right $K$-complete  quasi-metric space and $\vphi:X\to\Real\cup\{\infty\}$ a proper, bounded below and nearly lsc function. Suppose that,  for every $x\in X$,
\bequ\label{eq1.Taka-qm}
\vphi(x)>\inf \vphi(X)   \Ra   \exists y\in S(x),\; \vphi(y)<\vphi(x)\,.\eequ

Then there exists $z\in X$ such that $\vphi(z)=\inf \vphi(X)$, i.e., the function $\vphi$ attains its minimum  on $X$.
\end{theo}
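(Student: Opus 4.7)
The plan is to deduce Takahashi's principle directly from the weak Ekeland version already proved, namely Theorem \ref{t.Ek1-qm}, by a one-line contradiction. The hypotheses of Theorem \ref{t.Ek1-qm} are exactly those we have here: $(X,d)$ is sequentially right $K$-complete (a quasi-metric being in particular a quasi-pseudometric), and $\vphi$ is proper, bounded below and nearly lsc. So Theorem \ref{t.Ek1-qm} furnishes an element $z\in X$ with the property
\beqs
\vphi(y)=\vphi(z)\quad\mbox{for all } y\in S(z).
\eeqs

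Now I argue by contradiction. Suppose $\vphi(z)>\inf\vphi(X)$. Then the Takahashi-type hypothesis \eqref{eq1.Taka-qm} applied at the point $x=z$ yields some $y\in S(z)$ with $\vphi(y)<\vphi(z)$. But the previous paragraph asserts $\vphi(y)=\vphi(z)$ for every $y\in S(z)$, which contradicts the strict inequality. Hence $\vphi(z)=\inf\vphi(X)$, i.e., $\vphi$ attains its infimum at $z$.

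There is essentially no obstacle: the only point to watch is that the hypothesis \eqref{eq1.Taka-qm} is stated as ``there exists $y\in S(x)$ with $\vphi(y)<\vphi(x)$,'' which is formally a bit stronger than merely requiring $S(x)\sms\{x\}\ne\emptyset$ as in the classical Takahashi principle (Theorem \ref{t2.Ek-Taka-Car}); the stronger form is exactly what is needed to rule out the possibility that $y\in S(z)\sms\{z\}$ while still $\vphi(y)=\vphi(z)$ (which can happen in the quasi-pseudometric setting whenever $d(y,z)=0$, i.e.\ whenever $y\le_d z$). With the present formulation of \eqref{eq1.Taka-qm}, the contradiction goes through immediately, so the proof reduces to a short invocation of Theorem \ref{t.Ek1-qm}.
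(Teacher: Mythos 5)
Your proof is correct, but it takes a different route from the paper's. You derive the Takahashi principle as an immediate corollary of the weak Ekeland principle (Theorem \ref{t.Ek1-qm}): the $z$ produced there satisfies $\vphi(y)=\vphi(z)$ for all $y\in S(z)$, which is flatly incompatible with the hypothesis \eqref{eq1.Taka-qm} at $x=z$ unless $\vphi(z)=\inf\vphi(X)$. All hypotheses match (a quasi-metric space is a quasi-pseudometric space), so this two-line argument is complete; it is in effect the implication (wEk)\;$\Ra$\;(Tak) that the paper only establishes later, in Theorem \ref{t1.wEk-Taka-Car}. The paper instead proves Theorem \ref{t1.Taka-qm} directly: it negates the conclusion globally (assumes $\vphi(x)>\inf\vphi(X)$ for \emph{all} $x$), invokes Remark \ref{re2.Pic} and Proposition \ref{p1.Pic} to build a Picard sequence converging to some $z$, and then derives a contradiction from the inequality $2\vphi(x_n)-\vphi(x_{n-1})<\inf\vphi(S(x_{n-1}))\le\vphi(y)$ together with near lower semicontinuity. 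The paper's route re-exercises the Picard-sequence machinery (consistent with its stated aim of a unitary treatment), while yours is shorter, avoids the global negation, and makes the logical dependence on the weak Ekeland principle explicit. Your closing remark about why the hypothesis must demand $\vphi(y)<\vphi(x)$ rather than merely $S(x)\sms\{x\}\ne\ety$ is also on point: in the quasi-pseudometric setting one can have $y\in S(z)\sms\{z\}$ with $d(y,z)=0$ and $\vphi(y)=\vphi(z)$, and the paper handles the weaker-looking variant separately in Corollary \ref{c1.Taka-qm} by excluding $\ov{\{x\}}$ instead of $\{x\}$.
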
\begin{proof} Suppose, by contradiction, that
\bequ\label{eq0.Taka-qm}
\vphi(x)>\inf \vphi(X)\\,
\eequ
for all $x\in X$. Then, by \eqref{eq1.Taka-qm},
\bequ\label{eq2.Taka-qm}
\forall x\in X,\;   \exists y\in S(x),\; \vphi(y)<\vphi(x)\,,\eequ
or, equivalently,
\bequ\label{eq3.Taka-qm}
\vphi(x)>\inf \vphi (S(x))\,,
\eequ
for every $x\in X$.

Let   $x_0\in\dom \vphi .$ By \eqref{eq3.Taka-qm}, Remark \ref{re2.Pic}  and Proposition \ref{p1.Pic}.2,  there exists a sequence $(x_n)_{n\in\Nat_0}$ satisfying
  \eqref{eq2.Pic} and \eqref{eq3.Pic}.
  If $z=\lim_{n\to\infty}x_n$, then,   by \eqref{eq2.Taka-qm},   there exists $y\in S(z)$ such that

  \bequ\label{eq4.Taka-quasi-pseudometric-n}
\vphi(y)< \vphi(x)\,.\eequ

By \eqref{eq1.Pic}.(ii),
$$
2\vphi(x_{n})- \vphi(x_{n-1})<\inf \vphi(S(x_{n-1}))\le \vphi(y)\,,$$
for all $n\in\Nat$  (because $y\in S(x_{n-1})$, by \eqref{eq3.Pic}.(i)).

Taking into account the nearly lsc of the function $\vphi$ it follows
$$
\vphi(x)\le \lim_{n\to\infty}\vphi(x_n)=\lim_{n\to\infty}\big(2\vphi(x_{n})- \vphi(x_{n-1})\big)\le \vphi(y)\,,$$
in contradiction to \eqref{eq4.Taka-quasi-pseudometric-n}.

Consequently, the hypothesis \eqref{eq0.Taka-qm} leads to a contradiction, so it must  exist a point $z\in X$ such that $\vphi(z)=\inf \vphi(X).$
 \end{proof}

 \begin{corol}\label{c1.Taka-qm}
Suppose that  $(X,d)$   and $\vphi:X\to\Real\cup\{\infty\}$ satisfy the hypotheses of Theorem \ref{t1.Taka-qm}.
If,  for every $x\in X$,
\bequ\label{eq1b.Taka-qm}
\vphi(x)>\inf \vphi(X)   \Ra   \exists y\in X\sms\ov{\{x\}}\;\mbx{ such that }\;  \vphi(y)+d(y,x)\le \vphi(x)\,,\eequ
then the function $\vphi$ attains its minimum on $X$.
\end{corol}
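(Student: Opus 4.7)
The plan is to reduce Corollary \ref{c1.Taka-qm} directly to Theorem \ref{t1.Taka-qm} by showing that condition \eqref{eq1b.Taka-qm} is actually stronger than the hypothesis \eqref{eq1.Taka-qm}. Once this implication is established, the conclusion is immediate.

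The key observation is the characterization of the closure of a singleton in a quasi-pseudometric space: by \eqref{def.ord-qm} together with Proposition \ref{p3.top-ord}.1, we have $y\in\ov{\{x\}}$ if and only if $y\le_d x$ if and only if $d(y,x)=0$. Consequently, the condition $y\in X\sms\ov{\{x\}}$ is equivalent to $d(y,x)>0$.

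With this in hand, the argument is as follows. Fix $x\in X$ with $\vphi(x)>\inf\vphi(X)$. By \eqref{eq1b.Taka-qm} there is some $y\in X\sms\ov{\{x\}}$ satisfying
\bequs
\vphi(y)+d(y,x)\le\vphi(x)\,,
\eequs
which means in particular $y\in S(x)$. Since $y\notin\ov{\{x\}}$, the observation above gives $d(y,x)>0$, and therefore
\bequs
\vphi(y)\le\vphi(x)-d(y,x)<\vphi(x)\,.
\eequs
Thus the implication \eqref{eq1.Taka-qm} holds, and Theorem \ref{t1.Taka-qm} applies to yield a point $z\in X$ with $\vphi(z)=\inf\vphi(X)$.

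There is essentially no obstacle in this argument: the only substantive point is the translation between the set-theoretic condition $y\notin\ov{\{x\}}$ and the metric condition $d(y,x)>0$, which has been done above and is a direct consequence of the definition of $\tau_d$.
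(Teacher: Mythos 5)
Your proof is correct and follows essentially the same route as the paper: the paper likewise notes that \eqref{eq1b.Taka-qm} says $S(x)\sms\ov{\{x\}}\ne\ety$ whenever $\vphi(x)>\inf\vphi(X)$, invokes Proposition \ref{p1.Sx}.(iii) (whose proof is exactly your computation $0<d(y,x)\le\vphi(x)-\vphi(y)$) to get $\vphi(y)<\vphi(x)$, and then applies Theorem \ref{t1.Taka-qm}. The only difference is that you spell out the equivalence $y\notin\ov{\{x\}}\iff d(y,x)>0$ explicitly rather than citing the proposition.
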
\begin{proof} Condition \eqref{eq1b.Taka-qm} means that, for every $x\in X$,
\bequs
\vphi(x)>\inf \vphi(X)   \Ra  S(x)\sms\ov{\{x\}}\ne\ety\,.\eequs
 By \eqref{eq2.Sx}.(iii),
$$
y\in S(x)\sms\ov{\{x\}} \Ra  \vphi(y)<\vphi(x) \,,$$
so we can apply    Theorem \ref{t1.Taka-qm} to conclude.
\end{proof}

\subsection{Caristi fixed point theorem} We present both single-valued and set-valued versions of Caristi fixed point theorem
\begin{theo}[Caristi's theorem]\label{t1.Caristi-qm} Let $(X,d)$ be a sequentially right $K$-complete quasi-pseudometric space and $\vphi:X\to\Real\cup\{\infty\}$ a proper bounded below
  nearly lsc function.
  \ben\item[\rm 1.] If the mapping $T:X\to X$ satisfies
  \bequ\label{eq1.Car-qm}
  d(Tx,x)+\vphi(Tx)\le\vphi(x)\,,
  \eequ
  for all $x\in X,$ then there exists $z\in X$ such that $\vphi(Tz)=\vphi(z).$
  \item[\rm 2.]   If   $T:X\rightrightarrows X$ is a set-valued mapping such that
  \bequ\label{eq2.Car-qm}
  S(x)\cap Tx\ne \ety\,,
  \eequ
  for every $x\in X,$ then there exists $z\in X$ such that $\vphi(z)\in \vphi(Tz).$
\een\end{theo}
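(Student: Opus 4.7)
The plan is to deduce both parts as immediate consequences of the weak Ekeland principle already established in Theorem \ref{t.Ek1-qm}, using the fact that the Caristi-type hypotheses are phrased directly in terms of the set $S(x)$.

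First I would apply Theorem \ref{t.Ek1-qm} to the given $\vphi$ on $(X,d)$ to obtain a point $z\in X$ such that
\beqs
  \vphi(y)=\vphi(z)\qquad\mbx{for all }\; y\in S(z).
\eeqs
This is the only substantive input; everything else is a matter of unpacking definitions.

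For Part 1, I would observe that the Caristi inequality \eqref{eq1.Car-qm} is exactly the statement that $Tx\in S(x)$ for every $x\in X$. In particular $Tz\in S(z)$, and the displayed equality above gives $\vphi(Tz)=\vphi(z)$, which is the desired conclusion.

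For Part 2, the hypothesis \eqref{eq2.Car-qm} applied at $x=z$ yields some $y\in S(z)\cap Tz$. Again by the displayed equality, $\vphi(y)=\vphi(z)$, so from $y\in Tz$ we read off
\beqs
  \vphi(z)=\vphi(y)\in\vphi(Tz),
\eeqs
as required. I expect no real obstacle: the single-valued statement is a special case of the set-valued one (take $Tx$ to be the singleton $\{Tx\}$; the condition $Tx\in S(x)$ is equivalent to $\{Tx\}\cap S(x)\ne\ety$), and both reduce to the existence of a point $z$ on which $\vphi$ is constant over $S(z)$, which is precisely what Theorem \ref{t.Ek1-qm} provides via the Picard-sequence construction of Proposition \ref{p1.Pic}.
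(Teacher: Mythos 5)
Your proposal is correct and follows essentially the same route as the paper: the paper also reduces Part 1 to Part 2 via the observation that \eqref{eq1.Car-qm} is equivalent to $Tx\in S(x)$, and then invokes the existence of a point $z$ with $\vphi$ constant on $S(z)$ (citing Proposition \ref{p1.Pic} directly, whereas you cite Theorem \ref{t.Ek1-qm}, whose conclusion \eqref{eq1.Ek1-qm} is exactly that statement). The remaining unpacking of the hypotheses at $x=z$ is identical in both arguments.
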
\begin{proof} Observe that
 condition \eqref{eq1.Car-qm} is equivalent to
\bequ\label{eq3.Car-qm}
 Tx\in S(x) \,,
  \eequ
  for all $x\in X.$ This shows that \eqref{eq2.Car-qm} is an extension of  \eqref{eq1.Car-qm}, so it suffices to prove 2.

  By Proposition \ref{p1.Pic}, \eqref{eq02.Pic}(iii) and \eqref{eq3.Pic}(ii), there exists $z\in X$ such that
  $\vphi(y)=\vphi(z)$ for all $y\in S(z)$. By \eqref{eq2.Car-qm}, there exists $y\in S(z)\cap Tz.$ But then
    $$
    \vphi(z)=\vphi(y)\in\vphi(Tz)\,.$$
\end{proof}

\subsection{The equivalence of principles and completeness}
We prove   the equivalence between Ekeland, Takahashi and Caristi principles.
\begin{theo} \label{t1.wEk-Taka-Car}
Let  $(X,d)$ be  a quasi-pseudometric space
 and $\vphi:X\to\Real\cup\{\infty\}$  a proper bounded below   function.
 Then the following statements are equivalent.
 \ben
 \item[\rm (wEk)]The following holds
 \bequ\label{eq1.wEk-Taka}
   \exists z\in X,\; \forall   y\in S(z),\;\vphi(y) = \vphi(z) \,.
    \eequ
 \item[\rm (Tak)] \quad The following holds
 \bequ\label{eq2.wEk-Taka}\begin{aligned}
 \big\{\forall x\in X,\; &\big[\inf\vphi(X)<\vphi(x)\,\Ra\, \exists y\in S(x),\; \vphi(y)<\vphi(x)\big]\big\}\\&\Lra\;\exists z\in X,\; \vphi(z)=\inf\vphi(X)\,.\end{aligned}\eequ
  \item[\rm (Car)] \quad If the mapping $T:X\to X$ satisfies
  \bequ\label{eq1.wEk-Car}
  Tx\in S(x)\;\mbx{ for all }\; x\in X,
  \eequ
  then there exists $z\in X$ such that $\vphi(Tz)=\vphi(z).$
 \een\end{theo}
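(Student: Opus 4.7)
The plan is to close the logical cycle $(\mathrm{wEk}) \Ra (\mathrm{Car}) \Ra (\mathrm{Tak}) \Ra (\mathrm{wEk})$. Each step uses only the elementary monotonicity from Proposition \ref{p1.Sx}(ii) -- that $y \in S(x)$ implies $\vphi(y) \le \vphi(x)$ -- together with properness of $\vphi$ to handle the degenerate case $\vphi(x) = \infty$ (in which $S(x) = X$). Notably, neither completeness of $(X,d)$, nor any semicontinuity of $\vphi$, nor any Picard construction will enter here; the three statements are purely logically equivalent once all are framed in terms of the sets $S(x)$.

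For $(\mathrm{wEk}) \Ra (\mathrm{Car})$, I pick $z$ as in \eqref{eq1.wEk-Taka}. Given a self-map $T$ with $Tx \in S(x)$ for all $x$, the inclusion $Tz \in S(z)$ together with the defining property of $z$ immediately yields $\vphi(Tz) = \vphi(z)$. For $(\mathrm{Car}) \Ra (\mathrm{Tak})$, I argue by contradiction: if the hypothesis of (Tak) holds but its conclusion fails, then $\vphi(x) > \inf \vphi(X)$ for every $x \in X$, so the hypothesis produces, for each such $x$, some $y \in S(x)$ with $\vphi(y) < \vphi(x)$. Invoking the Axiom of Choice to assemble these into a selector $T : X \to X$ with $Tx \in S(x)$ and $\vphi(Tx) < \vphi(x)$, and then applying (Car), I obtain $z$ with $\vphi(Tz) = \vphi(z)$, contradicting the strict descent.

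For $(\mathrm{Tak}) \Ra (\mathrm{wEk})$, I again contrapose. If (wEk) fails, then for every $x \in X$ there is some $y \in S(x)$ with $\vphi(y) \ne \vphi(x)$; the monotonicity cited above (or, when $x \notin \dom \vphi$, properness applied to $S(x) = X$) upgrades this to $\vphi(y) < \vphi(x)$. Hence the hypothesis of (Tak) holds unconditionally, so (Tak) produces $z^* \in X$ with $\vphi(z^*) = \inf \vphi(X)$; but the failure of (wEk) at $z^*$ then gives $y \in S(z^*)$ with $\vphi(y) < \vphi(z^*) = \inf \vphi(X)$, which is absurd. No serious obstacle is anticipated; the main subtlety is the appeal to the Axiom of Choice in the middle step together with the brief aside needed for points outside $\dom \vphi$.
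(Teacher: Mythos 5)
Your proof is correct, and its ingredients coincide with the paper's: the monotonicity $y\in S(x)\Ra\vphi(y)\le\vphi(x)$ from Proposition \ref{p1.Sx}, the choice-based selector turning pointwise strict descent into a Caristi map $T$, and the observation that when (wEk) fails the Takahashi hypothesis is satisfied unconditionally (its consequent holds for every $x$, so in particular $\vphi(x)>\inf\vphi(X)$ whenever that antecedent is relevant). The only real difference is the decomposition: the paper uses (wEk) as a hub, proving $\neg\mbox{(Tak)}\iff\big(\forall x\,\exists y\in S(x),\ \vphi(y)<\vphi(x)\big)\iff\neg\mbox{(wEk)}$ by an explicit manipulation of negations, and then (wEk)$\,\Ra\,$(Car) together with $\neg$(wEk)$\,\Ra\,\neg$(Car); you instead close the cycle $\mbox{(wEk)}\Ra\mbox{(Car)}\Ra\mbox{(Tak)}\Ra\mbox{(wEk)}$. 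Your arrangement needs only three implications instead of four and localizes the appeal to the Axiom of Choice in the step (Car)$\,\Ra\,$(Tak) rather than in $\neg$(wEk)$\,\Ra\,\neg$(Car), while the paper's hub formulation makes the common pivot statement $\forall x\,\exists y\in S(x),\ \vphi(y)<\vphi(x)$ explicit, which is then reused elsewhere (e.g.\ in Remark \ref{re2.Pic} and the completeness theorem). Your aside handling $x\notin\dom\vphi$ via $S(x)=X$ and properness is a small point the paper glosses over but is handled correctly here.
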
\begin{proof} (wEK)$\iff$ (Tak).
 
 The proof is based on the following rules from mathematical logic:
\bequs
(p\rar q)\lrar (\neg p\vee q)\,,
\eequs
so that
\bequ\label{eq1.neg}
\neg(p\rar q)\lrar (p\wedge\neg q)\,.
\eequ

Observe that
\bequ\label{eq3.wEk-Taka}
\big(\forall x\in X,\; \exists y\in S(x),\; \vphi(y)<\vphi(x)\big) \Ra  \big(\forall x\in X,\; \vphi(x)>\inf\vphi(X)\big)\,.\eequ

Indeed, for every $x\in X$ take $y\in S(x)$ such that $\vphi(y)<\vphi(x).$ Then
$$
\inf\vphi(X)\le \vphi(y)<\vphi(x)\,.$$

For convenience, denote by Ta1 the expression
$$
 \forall x\in X,\; \big[\inf\vphi(X)<\vphi(x)\,\Ra\, \exists y\in S(x),\; \vphi(y)<\vphi(x)\big]\,.$$

Based on \eqref{eq1.neg} and   \eqref{eq3.wEk-Taka}, one obtains:
\begin{align*}
  \neg\mbox{(Tak)}&\iff \mbx{(Ta1)}\,\wedge \big(\forall z\in X,\; \vphi(z)>\inf\vphi(X)\big)\\
  &\iff  \big(\forall x\in X,\; \exists y\in S(x),\; \vphi(y)<\vphi(x)\big)\wedge \big(\forall z\in X,\; \vphi(z)>\inf\vphi(X)\big)\\
    &\iff \forall x\in X,\; \exists y\in S(x),\; \vphi(y)<\vphi(x)\,.
   \end{align*}

  Since,   by \eqref{eq2.Sx}.(ii), $\vphi(y)\le\vphi(x) $ for every $y\in S(x)$, it follows that
   \beqs
   \big((y\in S(x))\wedge (\vphi(y)\ne\vphi(x)\big) \iff \big((y\in S(x))\wedge (\vphi(y)<\vphi(x)\big)\,.
   \eeqs

   But then
  \bequ\label{non-wEk} \begin{aligned}
     \neg{\rm (wEk)}&\iff  \forall x\in X,\; \exists y\in S(x),\;\vphi(y)\ne\vphi(x)\\
     &\iff  \forall x\in X,\; \exists y\in S(x),\;\vphi(y)<\vphi(x)\\
       &\iff \neg\mbox{(Tak)}
   \end{aligned}\eequ
   
   (wEk)\;$\Ra$\; (Car). 
   
   Suppose that $T:X\to X$ satisfies \eqref{eq1.wEk-Car}. By (wEk) there exists $z\in X$ such that
 $\vphi(x)=\vphi(z)$ for all $x\in S(z).$ Since, by hypothesis,  $Tz\in S(z)$, it follows $\vphi(Tz)=\vphi(z).$\smallskip

$\neg$(wEk) \;$\Ra$\; $\neg$(Car). 

By \eqref{non-wEk},  $\neg$(wEk) is equivalent to
\bequ\label{eq.non-wEk}
 \forall x\in X,\; \exists y_x\in S(x),\;\vphi(y_x)<\vphi(x)\,.\eequ

 Define $T:X\to X$ by $Tx=y_x$, where $y_x$ is given by \eqref{eq.non-wEk},\,$x\in X.$ Then $Tx\in S(x)$ for every $x\in X$ but $\vphi(Tx)<\vphi(x)$
 for all $x\in X$, i.e., the assertion  (Car) fails.
 
\end{proof}

Finally we show that the validity of each of these principles is further equivalent to the sequential right $K$-completeness of the quasi-pseudometric space $X$.
\begin{theo} For a quasi-pseudometric space  $(X,d)$  the following are equivalent.
\ben\item[\rm 1.] The space $(X,d)$ is sequentially right $K$-complete.
\item[\rm 2.]{\rm (Ekeland variational principle - weak form)}  For every proper bounded below nearly lsc function $\vphi:X\to\Real\cup\{\infty\}$ there exists $z\in X$ such that
\bequ\label{eq1.wEk-compl}
\vphi(x)=\vphi(z)\;\mbx{ for all }\; x\in S(z)\,.
\eequ
\item[\rm 3.]{\rm (Takahashi minimization principle)}   Every proper bounded below nearly lsc function $\vphi:X\to\Real\cup\{\infty\}$ such that, for every $x\in X$,
\bequs
\vphi(x)>\inf \vphi(X)   \Ra   \exists y\in S(x),\; \vphi(y)<\vphi(x)\,,\eequs
 attains its minimum on $X$.
\item[\rm 4.]{\rm (Caristi fixed point theorem)} For every proper bounded below nearly lsc function $\vphi:X\to\Real\cup\{\infty\}$ and every mapping $T:X\to X$ such that
\bequs
Tx\in S(x)\;\mbx{ for all }\; x\in X,
\eequs
 there exists $z\in X$ such that $\vphi(Tz)=\vphi(z).$
\een\end{theo}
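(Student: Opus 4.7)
The plan is to establish the cycle $1\Rightarrow 2\iff 3\iff 4\Rightarrow 1$, using earlier results for every implication except the last, which requires a direct construction. The implication $1\Rightarrow 2$ is exactly Theorem~\ref{t.Ek1-qm}. The equivalences $2\iff 3\iff 4$ come for free from Theorem~\ref{t1.wEk-Taka-Car}, which is formulated for an arbitrary proper bounded below function on an arbitrary quasi-pseudometric space and so applies in particular to nearly lsc ones. The substantive direction is therefore $2\Rightarrow 1$, which I would prove by contraposition.

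Assume $(X,d)$ is not sequentially right $K$-complete, and fix a right $K$-Cauchy sequence $(x_n)$ with no $d$-limit. By Proposition~\ref{p1.rKC} no subsequence of $(x_n)$ can be $d$-convergent either; in particular $(x_n)$ must take infinitely many distinct values, since a repeated value would produce a constant, hence $d$-convergent, subsequence. Passing to a subsequence, I may therefore assume that the $x_n$ are pairwise distinct and that $d(x_{n+k},x_n)<2^{-n}$ for all $n\in\Nat$ and $k\in\Nat_0$. I then test statement (2) against the function
\[
\vphi(x)=\begin{cases}2^{-n+1}&\text{if }x=x_n,\\ +\infty&\text{if }x\notin\{x_n:n\in\Nat\},\end{cases}
\]
which is manifestly proper and bounded below by $0$. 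The failure of (wEk) for $\vphi$ is then immediate: if $z=x_n$, then $y=x_{n+1}$ satisfies $\vphi(y)+d(y,z)<2^{-n}+2^{-n}=\vphi(z)$, hence $y\in S(z)$ while $\vphi(y)\ne\vphi(z)$; and if $z\notin\{x_n:n\in\Nat\}$, then $\vphi(z)=+\infty$ forces $S(z)=X$, and $y=x_1\in S(z)$ satisfies $\vphi(y)=1\ne\vphi(z)$.

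The only delicate point, and the main obstacle, is to verify that $\vphi$ is nearly lsc. In general $\vphi$ is not $d$-monotone (a point $x\notin\{x_n\}$ with $d(x,x_n)=0$ would satisfy $x\le_d x_n$ while $\vphi(x)=+\infty>\vphi(x_n)$), so by Proposition~\ref{p1.lsc-n}.2 it need not be lsc; this is exactly why the ``nearly'' qualifier is essential. The claim to establish is that for any sequence $(y_k)$ with pairwise distinct terms $d$-converging to a point $y\in X$, one has $\vphi(y_k)=+\infty$ for all sufficiently large $k$, whence $\liminf_k\vphi(y_k)=+\infty\ge\vphi(y)$. Indeed, if on the contrary infinitely many $y_k$ belonged to $\{x_m:m\in\Nat\}$, the distinctness of the $y_k$ and of the $x_n$ would allow the extraction of a subsequence $(x_{m_j})$ of $(x_n)$ with $m_j$ strictly increasing and $x_{m_j}\xrightarrow{d}y$; Proposition~\ref{p1.rKC} would then force $(x_n)$ itself to $d$-converge to $y$, contradicting our choice. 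With this verification in hand, $\vphi$ is a proper bounded below nearly lsc function for which statement (2) fails, finishing the contraposition and closing the cycle.
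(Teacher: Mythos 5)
Your proposal is correct and follows essentially the same route as the paper: $2\iff 3\iff 4$ from Theorem \ref{t1.wEk-Taka-Car}, $1\Rightarrow 2$ from Theorem \ref{t.Ek1-qm}, and for $2\Rightarrow 1$ the very same counterexample function $\vphi(x_n)=2^{-n+1}$, $\vphi=\infty$ off the sequence, built on a non-convergent right $K$-Cauchy sequence. Your write-up is in fact slightly more careful in two spots the paper glosses over: you explicitly pass to a subsequence with pairwise distinct terms (needed for $\vphi$ to be well defined), and you explicitly dispose of the case $z\notin\dom\vphi$, where $S(z)=X$ and properness of $\vphi$ kills \eqref{eq1.wEk-compl}.
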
\begin{proof}
  The equivalences $2\iff 3\iff 4$ are contained in Theorem \ref{t1.wEk-Taka-Car} (even in a stronger form - with the same function $\vphi$.)
  
  The implication $1\;\Ra 2$  is contained in Theorem \ref{t.Ek1-qm}.\smallskip
  
  2\;$\Ra\;$1. 
  
 The proof is inspired from \cite{karap-romag15}. We proceed by contradiction. Suppose that there exists a right $K$-Cauchy sequence $(x_n)_{n\in\Nat}$ in $X$ which does not converge. Then $(x_n)$ has no cluster points (by Proposition \ref{p1.rKC}),
so it  does not contain constant subsequences.
  Passing to a subsequence if necessary, we can suppose    that  further
 \bequ\label{eq.compl-seq}
 d(x_{n+1},x_n)< \frac1{2^{n+1}},\quad\mbx{ for all } n\in\Nat.
 \eequ

The set
 $$
 B:=\{x_n : n\in\Nat\}\,$$
 is closed. Indeed, if there exists $x\in\ov B\sms B,$ then $x$ will be a cluster point for $(x_n)$ in contradiction to the hypothesis.

 Define $\vphi:X\to \Real$ by
 $$
\vphi(x)=\begin{cases}\frac1{2^{n-1}}\quad&\mbox{if }\;  x=x_n \;\mbox{ for  some }\; n\in\Nat,  \\
\infty \qquad\quad&\mbox{for}\quad x\in X\setminus B.
\end{cases}$$

The function $\vphi$ is nearly lsc. Indeed,  let $(y_n)$ be a sequence with distinct terms  converging to a point $x\in X.$

If $x\in X\setminus B,$  then, since $B$ is closed, the sequence $(y_n)$   must be eventually in $X\setminus B,$ and so $\lim_n\vphi(y_n)=\infty=\vphi(x).$

Suppose now that  $x=x_k$ for some $k\in\Nat.$

If the set $\{n\in\Nat : y_n\in B\}$
is infinite, then $x_k$ will be a cluster point of the sequence $(x_n)$. Consequently, only finitely many terms $y_n$ belong to $B$. This implies that there exists $n_0 \in\Nat$ such that $y_n\in X\sms B$ for all $n\ge n_0, $ so that  $\vphi(x_k)<\infty=\lim_n \vphi(y_n)$.

We have $\dom\vphi=B.$ If $z=x_k\in B$, for some $k\in\Nat$, then, by \eqref{eq.compl-seq} and the definition of the function $\vphi$,
$$
\vphi(x_{k+1})+d(x_{k+1},x_k)<\frac1{2^{k}}+\frac1{2^{k+1}}= \frac{3}{2^{k+1}}<\frac1{2^{k-1}}=\vphi(x_k),$$
which shows that $x_{k+1}\in S(x_k)$. Since $\vphi(x_k)=2^{-k+1} > 2^{-k}=\vphi(x_{k+1})$, it follows that \eqref{eq1.wEk-compl} fails for every $z\in\dom\vphi.$
\end{proof}

\subsection{The case of $T_1$ quasi-metric spaces}
Let us notice  that a topological space $(X,\tau)$ is $T_1$ if and only if $\ov{\{x\}}=\{x\}$ for all $x\in  X.$  A quasi-pseudometric space $(X,d)$ is $T_1$  (i.e., the topology $\tau_d$ is $T_1$) if and only if $d(x,y)>0$ for all $x\ne y$ in $X$. It follows that a $T_1$ quasi-pseudometric space is a quasi-metric space. By Proposition \ref{p1.lsc-n}.3 a nearly lsc function on a $T_1$ quasi-metric space is lsc. \par
Taking into account these remarks, the results proved for arbitrary quasi-pseudometric spaces take the following form in the $T_1$ case.

\begin{theo}\label{t1.Taka-Ek-Car}
Let $(X,d)$ be a  sequentially right $K$-complete  $T_1$ quasi-metric space and $\vphi:X\to\Real\cup\{\infty\}$ a proper bounded below  lsc function. The following are true.
\ben
\item[\rm 1.] {\rm (Ekeland variational principle -weak form, \cite{cobz11})}
 There exists $z\in X$ such that
 \bequ\label{eq1.wEk-T1}
   \vphi(z) < \vphi(x)+  d(x,z)\quad \mbx{ for all }\;   x\in X\setminus \{z\}\,.
    \eequ
\item[\rm 2.] {\rm (Takahashi principle, \cite{kassay19})}
If for every $x\in X$,
\bequ\label{eq1.Taka-T1}
\vphi(x)>\inf \vphi(X)   \Ra   \exists y\in X\sms\{x\}\;\mbx{ such that }\;  \vphi(y)+d(y,x)\le \vphi(x)\,,\eequ
then there exists $z\in X$ such that $\vphi(z)=\inf \vphi(X).$
\item[\rm 3.] {\rm (Caristi fixed point theorem)}\ben
\item[\rm(a)] If the mapping $T:X\to X$ satisfies
  \bequ\label{eq1.Car-T1}
  d(Tx,x)+\vphi(Tx)\le\vphi(x)\,,
  \eequ
  for all $x\in X,$ then there exists $z\in X$ such that $Tz= z.$
 \item[\rm (b)] If   $T:X\rightrightarrows X$ is a set-valued mapping such that
  \bequ\label{eq2.Car-T1}
  S(x)\cap Tx\ne \ety\,,
  \eequ
  for every $x\in X,$ then there exists $z\in X$ such that $z\in Tz.$
\een
\een \end{theo}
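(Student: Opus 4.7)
The plan is to derive each of the three statements by specializing the general quasi-pseudometric results established earlier (Theorems \ref{t.Ek1-qm} and \ref{t1.Caristi-qm}, and Corollary \ref{c1.Taka-qm}), exploiting two simplifications that occur in the $T_1$ setting. First, combining the specialization order formula $x\le_d y\iff d(x,y)=0$ from \eqref{def.ord-qm} with Proposition \ref{p1.top-ord}.(iii) gives $\ov{\{x\}}=\{x\}$ for every $x\in X$. Second, by Proposition \ref{p1.lsc-n}.3, lsc coincides with nearly lsc on a $T_1$ space, so the function $\vphi$ automatically meets the hypothesis of every general result.

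The central step is to apply Theorem \ref{t.Ek1-qm} to obtain $z\in X$ with $\vphi(y)=\vphi(z)$ for all $y\in S(z)$ and, moreover, $S(y)\sse \ov{\{y\}}$ for every $y\in S(z)$. In the $T_1$ case, $\ov{\{y\}}=\{y\}$, so combined with $y\in S(y)$ this forces $S(y)=\{y\}$ for each $y\in S(z)$. In particular $S(z)=\{z\}$, which is the key identity from which all three conclusions will follow.

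For statement 1, any $x\in X\sms\{z\}$ lies outside $S(z)=\{z\}$, so by the very definition of $S(z)$ one has $\vphi(z)<\vphi(x)+d(x,z)$, proving \eqref{eq1.wEk-T1}. For statement 2, observe that the hypothesis \eqref{eq1.Taka-T1} is, in the $T_1$ case, exactly the hypothesis \eqref{eq1b.Taka-qm} of Corollary \ref{c1.Taka-qm} (since $X\sms\{x\}=X\sms\ov{\{x\}}$), so that corollary delivers a minimizer. For statement 3(b), the assumption $S(x)\cap Tx\ne\ety$ applied at the $z$ with $S(z)=\{z\}$ forces $z\in Tz$. For statement 3(a), the Caristi inequality \eqref{eq1.Car-T1} rewrites as $Tx\in S(x)$, so part 3(b) applied to the set-valued map $x\mapsto\{Tx\}$ yields $z\in\{Tz\}$, i.e.\ $Tz=z$; alternatively, Theorem \ref{t1.Caristi-qm}.1 supplies $\vphi(Tz)=\vphi(z)$, whence $d(Tz,z)\le \vphi(z)-\vphi(Tz)=0$, and the $T_1$ property forces $Tz=z$.

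There is essentially no obstacle left: the substantive analytical work has already been carried out in the quasi-pseudometric generality, and the $T_1$ simplification $\ov{\{x\}}=\{x\}$ collapses every closure term appearing in the earlier conclusions to a singleton, so each of the three principles becomes a one-line specialization of a result already proved.
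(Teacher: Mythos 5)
Your proposal is correct and follows essentially the same route as the paper: apply Theorem \ref{t.Ek1-qm} to get $z$ with $S(z)=\{z\}$ via $\ov{\{z\}}=\{z\}$, and then read off each principle, with statement 2 reduced to Corollary \ref{c1.Taka-qm} (the paper verifies the hypothesis of Theorem \ref{t1.Taka-qm} directly, which amounts to the same check). The only cosmetic difference is that you invoke Proposition \ref{p1.lsc-n}.3 to pass from lsc to nearly lsc, whereas lsc trivially implies nearly lsc anyway; this changes nothing.
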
\begin{proof}
  1. By Theorem \ref{t.Ek1-qm} there exists $z\in X$ satisfying \eqref{eq1b.Ek1-qm}. Since $X$ is $T_1$, $\ov{\{z\}}=\{z\}$, so that $S(z)=\{z\}.$
  Taking into account this equality, \eqref{eq1.wEk-T1} is equivalent to \eqref{eq1b.Ek1-qm}.(iii) for $y=z.$

  2. Condition \eqref{eq1.Taka-T1} says that $S(x)\sms\{x\}\ne \ety$ whenever $\vphi(x)>\inf\vphi(X).$  Since $X$ is $T_1$, $d(y,x)>0$, so that
   $$\vphi(y)<\vphi(y)+d(y,x)\le \vphi(x)\,,$$
   for every $y\in S(x)\sms\{x\}.$ This shows that condition \eqref{eq1.Taka-qm} is verified by $\vphi.$

   3.  As we have noticed, condition \eqref{eq1.Car-T1} means that $Tx\in S(x)$ for every $x\in X,$ so it suffices to give the proof only for    set-valued  $T$. The proof is the same as that of Theorem \ref{t1.Caristi-qm}, taking into account that $S(z)=\{z\}$.

   Indeed,  $S(z)=\{z\}$ and $ S(z)\cap Tz\ne\ety$ imply $z\in Tz.$
\end{proof}

 \end{document}